\tikzset{
  symbol/.style={
    draw=none,
    every to/.append style={
      edge node={node [sloped, allow upside down,
      auto=false]{$#1$}}}
  }
}
\newtheorem{thm}{Theorem}[section]
\newtheorem{prop}[thm]{Proposition}
\newtheorem{defn-thm}[thm]{Theorem-Definition}
\newtheorem{defn-lem}[thm]{Lemma-Definition}
\newtheorem{cor}[thm]{Corollary}
\newtheorem*{thm*}{Theorem}
\newtheorem*{prop*}{Proposition}
\newtheorem*{cor*}{Corollary}
\newtheorem*{lem*}{Lemma}
\theoremstyle{definition}
\newtheorem{rem}[thm]{Remark}
\newtheorem*{claim*}{Claim}
\newtheorem*{rem*}{Remark}
\newcommand{\su}{\subset}
\newcommand{\bx}{{\bar{x}}}
\newcommand{\by}{{\bar{y}}}
\newcommand{\bz}{{\bar{z}}}
\newcommand{\ba}{{\bar{a}}}
\newcommand{\bbb}{{\bar{b}}}
\newcommand{\bc}{{\bar{c}}}
\newcommand{\y}{y}
\newcommand{\yy}{y}
\newcommand{\ttto}{\dashrightarrow} 
\newcommand{\tto}{\longrightarrow}
\newcommand{\wt}{\widetilde}
\newcommand{\pa}{\partial}
\newcommand{\ov}{\overline}
\newcommand{\isom}{\cong}
\newcommand{\PP}{\mathbb{P}}
\newcommand{\AAA}{\mathbb{A}}
\newcommand{\F}{\mathbb{F}}
\newcommand{\OO}{\mathcal{O}}
\newcommand{\pp}{\mathfrak{p}}
\newcommand{\fq}{\mathfrak{q}}
\newcommand{\al}{\alpha}
\newcommand{\be}{\beta}
\newcommand{\ga}{\gamma}
\newcommand{\de}{\delta}
\newcommand{\De}{\Delta}
\newcommand{\etta}{\eta}
\newcommand{\sig}{\sigma}
\newcommand{\ka}{\kappa}
\newcommand{\eps}{\varepsilon}
\newcommand{\vf}{\varphi}
\DeclareMathOperator{\Spec}{Spec}
\begin{document}

\title[A class of geometrically elliptic fibrations]
{A class of geometrically elliptic fibrations \\
by plane projective quartic curves}


\author{Cesar Hilario}
\address{Mathematisches Institut, Heinrich-Heine-Universit\"at, 40204 D\"usseldorf, Germany}
\email{cesar.hilario.pm@gmail.com}

\author{Karl-Otto Stöhr}
\address{IMPA, Estrada Dona Castorina 110, 22460-320 Rio de Janeiro, Brazil}
\email{stohr@impa.br}

\subjclass[2010]{14G17, 14G05, 14H05, 14H45, 14D06, 14E05}


\dedicatory{October 28, 2025}


\begin{abstract}
We investigate fibrations by non-hyperelliptic curves of arithmetic genus three and geometric genus one in characteristic two. 
Assuming that there is only one moving singularity and that its image in the Frobenius pullback of the fibration has degree one over the base, 
we provide a complete classification up to birational equivalence. This relies on an in-depth analysis of the generic fibres, whose geometry we describe explicitly. 
We prove that these fibrations are covered by elliptic fibrations, and that the covers are birational on the fibres but purely inseparable of exponent one on the bases.
\end{abstract}

\maketitle

\setcounter{tocdepth}{1}

\section{Introduction}

Bertini's theorem on moving singularities, published in the 1880s, also known as the Bertini-Sard theorem, has become a fundamental tool in Algebraic Geometry.
It states that in characteristic zero almost every fibre of a dominant morphism $\Phi: T \to B$ of smooth algebraic varieties over an algebraically closed field $k$ is  smooth.
However, as noticed by Zariski in the 1940s, the theorem may fail in positive characteristic, that is, though the total space $T$ is smooth, the fibration $\Phi:T\to B$ may admit moving singularities.
A moving singularity is a horizontal prime divisor on the total space $T$ with the property that each of its points is a singular point of the fibre to which it belongs. 

By a theorem of Tate \cite{Tate52,Sc09}, moving singularities of fibrations by integral curves can only exist if the characteristic is a prime $p$ not larger than $2(g-\ov g)+1$, where $g$, resp. $\ov g$, denotes the arithmetic genus, resp. geometric genus, of the general fibre. 
More generally, assuming that the morphism $\Phi:T\to B$ is proper, 
by Rosenlicht's genus drop formula \cite[Formula~2.3]{HiSt22} the number of the moving singularities is equal to $g-\ov g$, provided they are counted according to their $\delta$-invariants, 
which are multiples of $\tfrac{p-1}{2}$ by \cite[Proposition~2.5]{HiSt22}.

The most prominent counter-examples to Bertini's theorem are the quasi-elliptic fibrations, which are the fibrations by plane integral curves of degree $d=3$ with a moving cusp, and so 
$g=(d{-}1)(d{-}2)/2=1$, $\delta=1$, $\ov g =g-\delta=0$ and $p=2$ or $p=3$. 
They arose in the seminal work of Bombieri and Mumford on the extension of Enriques' classification of algebraic surfaces to positive characteristic \cite{BM76}. 

Fibrations by plane integral curves of degree $d=4$ (i.e., $g=3$) that admit moving singularities 
were investigated in characteristic $p=3,5\text{ and }7$ in the papers \cite{Sal11}, \cite{BMSa25}, \cite{St07} and \cite{St04}. 
A birational classification of the case $d=4$, $p=2$ and $\ov g=0$ has been recently established (see \cite{HiSt24}).

In the present paper we assume that $d=4$, $p=2$ and $\ov g=1$, that is, we investigate the geometry of the fibrations by plane integral quartic curves of geometric genus one in characteristic two. We assume in addition that there is only one moving singularity and that its image in the normalization of the Frobenius pullback of the fibration $\Phi:T\to B$ has degree one over the base $B$. 

To get a birational classification of these fibrations we work in the arithmetic setting of function field theory. 
If the ground field $k$ is replaced by the function field $K:=k(B)$ of the base, then the higher dimensional function field $k(T)|k$ of the total space $T$ becomes the one-dimensional function field $k(T)|k(B)=K(C)|K$ of the generic fibre 
\[
C := T \times_B \Spec(K),
\] 
which is an integral proper curve over $\Spec(K)$. 
The closed points of $C$ correspond bijectively to the horizontal prime divisors of the fibration $\Phi$, i.e., to the prime divisors of the total space $T$ that cover the base $B$. 
As the total space $T$ is smooth, the generic fibre $C$ is regular, 
and so the local rings of its closed points are the discrete rank one valuation rings of its function field $K(C)|K$. 
Though the generic fibre $C|K$ is regular, it admits a non-smooth point, 
namely the closed point that corresponds to the moving singularity. 
Our main result (see Section~\ref{2025_10_17_19:35}) gives an explicit description of the generic fibre $C|K$.
\begin{thm}
Let $C$ be a regular proper geometrically integral curve of arithmetic genus $g=3$ and geometric genus $\ov g = 1$ over a field $K$ of characteristic $p=2$.
Assume that the curve has only one non-smooth point $\pp$, and that its image $\pp_1$ in the normalization $C_1|K$ of its (first) Frobenius pullback is a rational point. Then the following holds.
\begin{enumerate}[\upshape (a)]
\item 
The curve $C|K$ is hyperelliptic if and only if the $j$-invariant of the elliptic curve $C_1|K$ is equal to zero.

\item 
If the curve $C$ is non-hyperelliptic then the $j$-invariant $j$ of $C_1$ is a non-zero square, and $C$ is isomorphic to a plane projective quartic curve given by an equation
\[cx^4+(z^2+by^2+abx^2)(e(z^2 + b y^2 + a b x^2) + y^2 + xy + a x^2) = 0 \]
where $a,b,c,e \in K$, $b \notin K^2$ and $c^2e^2j=1$.
The converse is also true, i.e., there are no other conditions on the $j$-invariant and the coefficients $a,b,c,e$.
\end{enumerate}
\end{thm}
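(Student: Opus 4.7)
The strategy is to analyse $C$ through the purely inseparable degree-two morphism $\pi\colon C\to C_1$ obtained by composing the relative Frobenius of $C$ with the normalization of its Frobenius pullback. Since $\pp_1$ is $K$-rational and $C_1|K$ has geometric genus one, $C_1$ is an elliptic curve with origin $\pp_1$, which we put in Weierstrass normal form: either $Y^2+XY=X^3+a_2X^2+a_6$ with $j=a_6^{-1}\neq 0$, or $Y^2+a_3Y=X^3+a_4X+a_6$ with $j=0$. The function field extension $K(C)|K(C_1)$ is purely inseparable of degree two, so $K(C)=K(C_1)(\alpha)$ with $\alpha^2\in K(C_1)$, and the curve $C$ is encoded by the class of $\alpha^2$ in $K(C_1)/K(C_1)^2$.

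The core of the argument is to pin down $\alpha^2$ from the local geometry at the unique non-smooth point $\pp$, which must lie above $\pp_1$. Regularity of $C$, uniqueness of $\pp$, and the equality $\delta_\pp=g-\ov g=2$ coming from Rosenlicht's genus-drop formula force $\alpha^2$ to have a prescribed pole pattern at $\pp_1$ and to be regular elsewhere. A valuation-theoretic analysis at $\pp$ and $\pp_1$ then shows that, after normalising coordinates, $\alpha^2$ is determined by four parameters $a,b,c,e\in K$, with $b\notin K^2$ reflecting the fact that the residue field extension $K(\pp)|K$ is inseparable of degree two.

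The next step computes $H^0(C,\om_C)$, a three-dimensional $K$-vector space: two sections are pulled back via $\pi$ from $C_1$, and a third one involves $\alpha$ and arises from the ramification of $\pi$ at $\pp$. In the non-hyperelliptic case the canonical map embeds $C$ as a plane quartic, and the relation among the squares of this basis, combined with the Weierstrass equation of $C_1$ and the relation defining $\alpha^2$, rearranges into the displayed equation; comparing coefficients with the Weierstrass form of $C_1$ yields $c^2e^2j=1$. For part~(a), the canonical map degenerates to a $2$-to-$1$ cover of a conic precisely when a pair of geometrically conjugate sections collapse, and one verifies that this occurs exactly when $C_1$ admits the $j=0$ Weierstrass form. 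Conversely, for any $a,b,c,e\in K$ with $b\notin K^2$ one checks directly that the displayed quartic is geometrically integral of arithmetic genus three with a single non-smooth point of $\delta$-invariant two, whose Frobenius pullback normalises to an elliptic curve of $j$-invariant $(c^2e^2)^{-1}$.

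The main obstacle is the second step: controlling the rational function $\alpha^2\in K(C_1)$ from purely local data at $\pp$. In characteristic two the conductor of $\OO_{C,\pp}$ in its integral closure interacts subtly with the inseparable residue field extension $K(\pp)|K$, and managing these two effects simultaneously, while preserving regularity of $C$ and the correct arithmetic genus, is the technical heart of the argument.
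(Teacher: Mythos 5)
Your overall architecture coincides with the paper's: realize $F=K(C)$ as a purely inseparable degree-two extension $F_1(z)$ of the elliptic function field $F_1=K(C_1)$, choose $z\in H^0(\pp^3)\setminus H^0(\pp_1^3)$ so that $z^2$ is an explicit quadratic expression in the Weierstrass coordinates $x,y$, constrain its coefficients by the singularity data, and eliminate to obtain the plane quartic model. So the route is the same one the paper takes; the problem is that the two steps you defer are exactly where the content lies, and as stated your plan would not recover two of the claims.

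First, the assertion that $j$ is a non-zero \emph{square} does not follow from the local analysis at $\pp$ and $\pp_1$, to which your plan attributes the entire determination of $z^2$ ("a prescribed pole pattern at $\pp_1$ and regular elsewhere"). The analysis at $\pp$ only yields $z^2=by^2+b_2x^2+cx+d$ with $b\notin K^2$, $c\neq 0$, for an arbitrary Weierstrass model $y^2+xy=x^3+ax^2+\De$ with $j=\De^{-1}$. Regularity of $z^2$ away from $\pp_1$ is automatic and is not the relevant condition: what must be checked is that the prime $\fq$ of $F|K$ over the zero $\fq_1$ of $x$ (the $2$-torsion point of $C_1$) is non-singular, and if $\De\notin K^2$ one finds $\OO_\fq=\OO_{\fq_1}[z]$ and $\de(\fq)=2\de(\fq_1)+1\geq 1$, contradicting $g=3$. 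This separate global step is what forces $\De\in K^2$, i.e.\ $j$ a square, and also what allows the normalization $d=b\etta^2$ leading to $e$. Second, the mechanism behind the dichotomy in (a) is left as a vague heuristic ("conjugate sections collapse"). The actual source is the Laurent expansion of $z^2$ in the local parameter $t=x/y$ at $\pp_1$ together with the $\de$-invariant criterion of Bedoya--St\"ohr: when $j=0$ the conditions $\de(\pp)=2$ and $\pp|\pp_1$ unramified annihilate the coefficients of $y^2$, $xy$ and $y$, so $z^2\in K[x]$ and $K(x,z)$ is a genus-zero subfield of index two (hyperelliptic); when $j\neq0$ they force $z^2=by^2+\cdots$ with $b\notin K^2$, and the quartic model shows $C$ is non-hyperelliptic. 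Without carrying out that expansion-and-$\de$-invariant computation, neither the shape of $z^2$, nor $b\notin K^2$, nor part (a) is established.
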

\noindent
To describe the isomorphism classes we decide when two quadruplets of coefficients determine isomorphic curves (see Corollary~\ref{2025_03_13_19:20}~\ref{2025_03_13_19:22}).

In Section~\ref{2025_10_01_15:20} we investigate the fibration $\Phi:T\to B$ that is uniquely determined up to birational equivalence by the function field 
$K(C)|K=k(T)|k(B)$ of the generic fibre $C|K$. The geometric generic fibre
\[
C \times_{\Spec(K)} \Spec(\ov K ) = T \times_B \Spec\ov{k(B)},
\]
which describes the behaviour of most special fibres, is a plane projective integral quartic curve over $\Spec(\ov K)$ with the remarkable property 
that its tangents (at the smooth closed points) meet in a common intersection point, 
and are bitangents or pass through the unique non-smooth point on the quartic curve.
In particular, the quartic does not admit ordinary inflection points. 
However, there are two non-ordinary inflection points, which are the points where the tangency points of the bitangents collide. 

We prove that the fibration $\Phi$ is covered by an elliptic fibration, a cover that is birational on the fibres but purely inseparable of exponent one on the bases. 
This implies that the geometric generic fibre (after excluding its generic point) is endowed with the structure of an abelian group whose neutral element is its unique non-smooth point. We show that its elements of order four are precisely its two (non-ordinary) inflection points.

\section{Geometrically elliptic function fields of genus three}
\label{2025_10_17_19:35}

By \cite[II.7.4]{EGA},
there is a one-to-one contravariant correspondence between the regular proper geometrically integral curves $C$ defined over a field $K$ and the one-dimensional separable function fields $F|K$, given by the assignment $C|K \mapsto F|K = K(C)|K$. 

Let $F|K$ be a one-dimensional separable function field in characteristic $p>0$, of genus $g=3$ and of geometric genus $\ov g=1$.
By Rosenlicht's genus drop formula 
the number of the singular primes of $F|K$ counted with their singularity degrees is equal to $g - \ov g = 2$. We assume that $F|K$ admits only one singular prime $\pp$, and so the singularity degree of $\pp$ is equal to $\de(\pp)=2$.
Since $\de(\pp)$ is a multiple of $(p-1)/2$ \cite[Proposition~2.5]{HiSt22}, the characteristic $p$ is not larger than $5$.
The cases where $p=3$ and $p=5$ were investigated in \cite{BMSa25} and \cite{St07}. We will assume that $p=2$.
We further assume that the singular prime $\pp$ is purely inseparable, i.e., its residue field $\ka(\pp)$ is a purely inseparable extension of the constant field $K$.
This means that the uniqueness of the singular prime $\pp$ is preserved under finite separable constant field extensions \cite[Corollary~2.17]{HiSt22}.
Then it follows from \cite[Theorem~2.24]{HiSt22} that the restriction $\pp_3$ of the prime $\pp$ to the third Frobenius pullback $F_3|K := KF^{p^3}|K$ is rational, i.e., its degree is equal to $\deg(\pp_3)=1$.
Thus the restriction $\pp_1$ of $\pp$ to the first Frobenius pullback $F_1|K := F^p K|K$ has degree one, two or four.
In this paper we assume  that $\deg(\pp_1) = 1$.
As $p=2$, $g=3$ and $\ov g = 1$, the genus of $F_1|K$ is equal to $g_1=1$ \cite[Corollary~2.7~(ii), (iii)]{HiSt22}.
Hence $F_1|K$ equipped with the rational prime $\pp_1$ is an elliptic function field.

We will apply Tate's treatment of elliptic curves, which works in any characteristic \cite{Tate74}.
As $g=1$ Riemann's theorem implies that
\[ \dim H^0(\pp_1^n) = n \quad \text{for each positive integer $n$.} \]
Thus there exist functions $x \in H^0(\pp_1^2) \setminus K$ and $y \in H^0(\pp_1^3) \setminus H^0(\pp_1^2)$. Hence $[F_1:K(x)] = 2$, $[F_1:K(y)] = 3$ and therefore
\[ F_1 = K(x,y). \]
Moreover
\begin{align*}
    H^0(\pp_1^{2n}) &= \bigoplus_{i=0}^n K x^i \oplus \bigoplus_{i=0}^{n-2} K x^i y \quad \text{for each $n\geq1$,} \\
    H^0(\pp_1^{2n+1}) &= \bigoplus_{i=0}^n K x^i \oplus \bigoplus_{i=0}^{n-1} K x^i y  \quad \text{for each $n\geq0$.}
\end{align*}
As $y^2 \in H^0(\pp_1^6) \setminus H^0(\pp_1^5)$ there are uniquely determined constants $a_0 \neq 0$, $a_1$, $a_2$, $a_3$, $a_4$ and $a_6$ such that
\[ y^2 = a_0 x^3 - a_1 xy + a_2 x^2 - a_3 y + a_4 x + a_6. \]
Transforming $x = a_0^{-1} x'$ and $y = a_0^{-1} y'$ we can normalize $a_0=1$, and so we get the Weierstrass normal form
\[ y^2 + a_1 xy + a_3 y = x^3 + a_2 x^2 + a_4 x + a_6. \]
The non-singularity of the cubic curve means that the discriminant $\De$ is non-zero. As we assume that $p=2$, we have
\[ \De = a_1^6 a_6 + a_1^5 a_3 a_4 + a_1^4 a_2 a_3^2 + a_1^4 a_4^2 + a_1^3 a_3^3 + a_3^4 \neq 0. \]
The functions $x$ and $y$ are uniquely determined up to the transformations
\[ x = \mu^2 x' + \varrho, \quad y = \mu^3 y' + \sig \mu^2 x' + \tau \quad \text{where $\mu \in K^*$ and $\varrho, \sig, \tau \in K$,} \]
which imply the transformation rules
\begin{align*}
    \mu a_1' &= a_1, \quad \mu^2 a_2' = a_2 + \sig a_1 + \varrho+ \sig^2, \\
    \mu^3 a_3' &= a_3 + \varrho a_1, \quad \mu^4 a_4' = a_4 + \sig a_3 + (\tau + \sig \varrho) a_1 + \varrho^2, \\
    \mu^6 a_6' &= a_6 + \varrho a_4 + \tau a_3 + \varrho^2 a_2 + \varrho \tau a_1 + \varrho^3 + \tau^2
\end{align*}
(see \cite[Formulae~5 and~7]{Tate74}). It follows that $\mu^{12} \De' = \De$ and so
\[ j:= \frac{a_1^{12}}{\De} \]
is an invariant of the function field $F_1|K$.

If $j=0$, i.e., $a_1=0$ then by the transformation rules we can normalize $a_2=0$, i.e.,
\[ y^2 + a_3 y = x^3 + a_4 x + a_6 \quad \text{where $\De = a_3^4 \neq 0$,} \]
and our freedom to transform is restricted by the condition $\varrho = \sig^2$.


If $j \neq 0$ then we can normalize $a_1 = 1$ and $a_3 = a_4 = 0$, i.e.,
\[ y^2 + xy = x^3 + a_2 x^2 + a_6 \quad \text{where $\De = a_6 \neq 0$ and $j = \De^{-1}$.} \]
If in addition $j$ is a square, then instead of normalizing $a_4=0$ we can normalize $a_6=0$, i.e.,
\[ y^2 + xy = x^3 + a_2 x^2 + a_4 x \quad \text{where $\De = a_4^2 \neq 0$ and $j = \De^{-1}$.} \]
In both cases it remains the freedom to transform
\[ x=x', \quad y=y'+\sigma x' \quad \text{where $\sigma \in K$}, \]
and therefore $a_2' = a_2 + \sig + \sig^2$, $a'_4=a_4$ and $a'_6=a_6$.

\begin{thm}\label{2025_01_19_23:30}
Let $F|K$ be a one-dimensional separable function field of characteristic $p=2$, of genus $g=3$ and of geometric genus $\ov g = 1$ that admits a prime $\pp$ of singularity degree $\de(\pp) = 2$, whose restriction $\pp_1$ to the first Frobenius pullback $F_1|K := F^p K |K$ is rational.
\begin{enumerate}[\upshape (a)] 
    \item \label{2025_01_19_23:31}
    If the $j$-invariant of $F_1|K$ is equal to zero, then there are functions $x$, $y$ and $z$ such that 
    \begin{align*}
        F_1 &= K(x,y) \quad \text{where $y^2 + ay = x^3 + a_4 x + a_6$, $a \in K^*$, $a_4,a_6 \in K$,} \\
        F &= F_1(z) \quad \text{where $z^2 = b x^2 + a^{-1} x + d$, $b \in K\setminus K^2$ and $d \in K$.}
    \end{align*}
    
    \item \label{2025_01_19_23:32}
    If $j \neq 0$ then there are functions $x$, $y$ and $z$ such that
    \begin{align*}
        F_1 &= K(x,y) \quad \text{where $y^2 + xy = x^3 + a x^2 + \etta x$, $a \in K$, $\etta \in K^*$, $j = \etta^{-2}$,} \\
        F &= F_1(z) \quad \text{where $z^2 = b y^2 + ab x^2 + c x$, $b \in K\setminus K^2$ and $c \in K^*$.}
    \end{align*}
\end{enumerate}
Conversely, in each item the two equations define a function field of the aforementioned type.
\end{thm}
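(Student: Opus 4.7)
The plan is to exploit that the extension $F/F_1$ is purely inseparable of degree $p=2$. Indeed, $F_1\subsetneq F$ (their genera $g_1=1$ and $g=3$ differ) while $[F:F_1]$ divides $[F:F^p]=p=2$, so $[F:F_1]=2$. Thus $F=F_1(z)$ with $w:=z^2\in F_1\setminus F_1^2$, and the only transformations preserving these conditions are $z\mapsto\la z+u$ for $\la\in K^*$ and $u\in F_1$, which change $w$ to $\la^2(w+u^2)$. Hence $w$ is determined only up to addition of a square in $F_1$ and rescaling by $K^{*2}$.

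The next step is to locate and bound the pole divisor of $w$. Riemann--Hurwitz applied to $F/F_1$ yields $\deg\mathfrak d_{F/F_1}=2(g-1)-2\cdot 2(g_1-1)=4$. Since $\pp$ is the only prime of $F$ that becomes singular after base change to $\ov K$, and since the different of a purely inseparable degree-$p$ extension is supported exactly at the primes where $w\pmod{F_1^2}$ fails to be locally a square, the entire different is concentrated at $\pp_1$. Combining this with $\de(\pp)=2$, I would deduce that, after a suitable substitution $z\mapsto \la z+u$, the function $w$ lies in $H^0(\pp_1^4)$ in case~\ref{2025_01_19_23:31} and in $H^0(\pp_1^6)$ in case~\ref{2025_01_19_23:32}, with leading coefficient at $\pp_1$ a non-square in $K$ (this is the source of the condition $b\notin K^2$).

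The explicit normalization then uses the Riemann--Roch bases $\{1,x,y,x^2\}$ of $H^0(\pp_1^4)$ in case~(a) and $\{1,x,y,x^2,xy,x^3\}$ of $H^0(\pp_1^6)$ in case~(b). In case~(a), writing $w=\al+\be x+\ga y+\de x^2$, subtracting $u^2=(\al_0+\al_1 x)^2$ with $u\in H^0(\pp_1^2)=K\oplus Kx$ absorbs the $K^2$-parts of $\al$ and $\de$; a $K^*$-rescaling then normalizes $\be$ to $a^{-1}$, and the leading coefficient $\de=:b$ lies in $K\setminus K^2$ since $w\notin F_1^2$. Case~(b) is analogous: the Weierstrass relation $y^2=x^3+xy+ax^2+\etta x$ is used to eliminate the $x^3$-component in favor of $y^2$, producing $z^2=by^2+abx^2+cx$ after matching the Weierstrass parameters and rescaling.

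The converse is a direct verification: each stated equation defines a purely inseparable degree-$2$ extension of an elliptic function field, and one checks the genera ($g=3$ via Riemann--Hurwitz, $\ov g=1$ via Rosenlicht's drop formula) as well as the existence of a unique singular prime at $\pp_1$ of singularity degree $2$ by an explicit computation of the local conductor. \emph{The main obstacle} I expect is ruling out the ``missing'' basis vectors---namely the $y$-coefficient in case~(a) and the $xy$-component in case~(b): a nonzero contribution would produce an extra odd pole order of $w$ at $\pp_1$, pushing the different exponent strictly above $4$ (respectively $6$) and contradicting the Riemann--Hurwitz bound combined with $\de(\pp)=2$. The cleanest implementation will likely go through an explicit local analysis at $\pp_1$ using the uniformizer $t_1=x/y$ (case~(a)) or $t_1=x^2/y$ (case~(b)), together with the Tate--Schr\"oer conductor formula for the integral closure of $\OO_{\pp_1}\otimes_K\ov K$ in $F\otimes_K\ov K$.
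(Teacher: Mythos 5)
Your overall strategy---realize $F=F_1(z)$ with $w=z^2\in F_1$, write $w$ in a Riemann--Roch basis of $H^0(\pp_1^n)$, and pin down the coefficients by a local analysis at $\pp_1$---is the paper's strategy, and you correctly identify the elimination of the $y$- resp.\ $xy$-terms as the crux. But the mechanism you propose for bounding the polar divisor of $w$ does not work. The Hurwitz formula $2g-2=[F:F_1](2g_1-2)+\deg\mathfrak{d}$ is a statement about separable extensions; for the purely inseparable extension $F|F_1$ the trace form vanishes and $\Omega_{F|F_1}$ is not torsion, so there is no off-the-shelf ``different'' satisfying this identity. Worse, your claim that the ramification of $F|F_1$ is concentrated over $\pp_1$ because $\pp$ is the only singular prime conflates two different notions: in the final normal form of case (b) one has, at the prime $\fq_1=(0,0)$ of the cubic, $w=(b+c\eta^{-1})y^2+c\eta^{-2}y^3+\cdots$ with $c\neq 0$, so $w$ is not locally a square there (indeed $\fq|\fq_1$ is ramified when $b+c\eta^{-1}\in K^2$), and yet $\de(\fq)=0$, i.e.\ $\fq$ is non-singular. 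So ``$w\in H^0(\pp_1^4)$ resp.\ $H^0(\pp_1^6)$ after adjusting $z$'' does not follow from your argument. The paper obtains this containment by a different and essential device: since $\de(\pp)=2$ and $\deg\pp=2$, Riemann--Roch on $F$ gives $\dim H^0(\pp^3)=4>3=\dim H^0(\pp_1^3)$, so one may choose $z\in H^0(\pp^3)\setminus H^0(\pp_1^3)$, whence $z^2\in F_1\cap H^0(\pp^6)=H^0(\pp_1^6)$ because $\pp|\pp_1$ is unramified. The correct bookkeeping for the genus is then Rosenlicht's drop formula $g-\ov g=\sum_\fq\de(\fq)$ together with the Bedoya--St\"ohr computation of $\de(\fq)$ from the local expansion of $w$, both in the direct implication and in the converse (where you again invoke Riemann--Hurwitz).

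A second genuine omission: in case (b) the normal form $y^2+xy=x^3+ax^2+\eta x$ with $j=\eta^{-2}$ presupposes that the $j$-invariant is a \emph{square} in $K$, equivalently $\De\in K^2$. This is not a hypothesis; it must be proved, and the paper devotes a separate argument to it (if $\De\notin K^2$, the unique prime $\fq$ of $F$ over the $2$-torsion point $(0,\De^{1/2})$ satisfies $\OO_\fq=\OO_{\fq_1}[z]$, hence $\de(\fq)=2\de(\fq_1)+1\geq1$, contradicting uniqueness of the singular prime). Your ``matching the Weierstrass parameters and rescaling'' skips this entirely, as well as the normalizations $d=b\eta^2$ and $b_2=ab$, and, in case (a), the exclusion of $b_2\in K^2$---each of which needs its own small $\de$-computation. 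Finally, a slip worth correcting: $x^2/y$ has a simple pole at $\pp_1$, not a simple zero; the uniformizer is $t=x/y$ in both cases.
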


\begin{proof}
As $\deg(\pp_1) = 1$, $[F:F_1] = p =2$ and as a singular prime is inseparable, we conclude that $\pp$ is purely inseparable of degree two and that $\pp|\pp_1$ is unramified. As $g=3$ it follows from Riemann's theorem that 
\[ \dim H^0(\pp^n) = 2n-2 \quad \text{whenever $n\geq3$.} \]
In particular $\dim H^0(\pp^3) = 4 > \dim H^0(\pp_1^3) = 3$ and so there exists a function $z \in F$ such that
\[ H^0(\pp^3) = H^0(\pp_1^3) \oplus Kz = K \oplus K x \oplus K y \oplus K z. \]
Note that $z$ is a separating variable of the function field $F|K$, in other words, the field $F$ is separable over $K(z)$, i.e., $F=F_1(z)$, i.e., $z\notin F_1$, since $H^0(\pp^3) \cap F_1 = H^0(\pp_1^3)$.
As $z^2 \in F_1 \cap H^0(\pp^6) = H^0(\pp_1^6)$ there are uniquely determined constants $b_0,b_1,b_2,b_3,c$ and $d$ such that
\[ z^2 = b_0 y^2 + b_1 xy + b_2 x^2 + b_3 y + c x + d. \]
Moreover, since $z^2$ is a separating variable of $F_1|K$, i.e., $z^2 \notin F_2 = F_1^2 {\cdot} K = K(x^2,y^2)$, it follows that
\begin{equation}\label{2025_04_08_16:30}
    (b_1,b_3,c) \neq (0,0,0).
\end{equation}
Let $t := \frac xy$. As the order of $t \in F_1$ at $\pp_1$ is equal to one, i.e., as $t$ is a local parameter at $\pp_1$, the elements of $F_1$ can be written as formal Laurent series in $t$.
As
\[ z^2 = b_0 y^2 + b_1 t y^2 + b_2 t^2 y^2 + b_3 y + c t y + d \]
the Laurent series expansion of $z^2$ can be obtained from the Laurent series expansion of $y$.
By straightforward computations we obtain
\[ y = t^{-3} + a_1 t^{-2} + a_2 t^{-1} + a_3 t^0 + (a_4 + a_1 a_3) t^1 + \cdots \]
where the dots stand for higher order terms.

\bigskip

(a) We assume that $j=0$ and normalize
\[ y^2 + ay = x^3 + a_4 x + a_6, \quad \text{$a:= a_3 \neq0$.} \]
Then $y = t^{-3} + a t^0 + a_4 t^1 + \cdots$ and so the expansion of $z^2$ starts as follows
\begin{align*}
    z^2 &= b_0 t^{-6} + b_1 t^{-5} + b_2 t^{-4} + b_3 t^{-3} + c t^{-2} + (d + a b_3 + a^2 b_0) t^0 \\
    & \qquad + (ac + a^2 b_1 + a_4 b_3) t^1 + \cdots.
\end{align*}
If $b_0 \notin K^2$ then by \cite[Proposition~4.1]{BedSt87} the assumption that $\de(\pp)=2$ means that the coefficients of the expansion of $z^2$ of order $-5$ and $-3$ are zero, while the coefficient of order $-1$ is non-zero, but this cannot happen.
Thus we conclude that $b_0 \in K^2$, and by replacing $z$ with $z + b_0^{1/2} y$ we can normalize $b_0 = 0$.
As $\pp|\pp_1$ is unramified it follows that $b_1=0$.

Next we assume, for the sake of contradiction, that $b_2\in K^2$. Then by replacing $z$ with $z + b_2^{1/2} x$ we can assume that $b_2 = 0$. As $\pp|\pp_1$ is unramified it follows that $b_3=0$, i.e., $z^2 = cx + d$, and therefore
\[ z^2 = c t^{-2} + d t^0 + act^1 + \cdots \quad \text{where $a\neq0$ and $c\neq 0$.} \]
By \cite[Proposition~4.1]{BedSt87}, if $c\notin K^2$ then $\de(\pp)=1 \neq 2$, and if $c\in K^2$ and $d\notin K^2$ then $\de(\pp) = 0 \neq 2$. If $c\in K^2$ and $d\in K^2$ then $\pp|\pp_1$ is ramified, and this is again a contradiction.

This shows that $b:= b_2 \notin K^2$. As $b_0 = b_1 = 0$ it follows from \cite[Proposition~4.1]{BedSt87} that the assumption $\de(\pp) = 2$ is satisfied if and only if the coefficients of the expansion of $z^2$ of order $-3$ and $-1$ are zero, while the coefficient of order $1$ is non-zero, that is, $b_3 = 0$ and $c\neq 0$, i.e.,
\[ z^2 = b x^2 + c x + d \quad \text{where $b \notin K^2$ and $c\neq 0$.} \]
Moreover, transforming $x = \mu^2 x'$, $y = \mu^3 y'$ and $z = z'$ where $\mu = a c$ we can normalize $c = a^{-1}$.

To prove the converse we recall that the function field $F|K = K(x,y,z) | K$ is described by the two polynomial equations
\[ y^2 + ay + x^3 + a_4 x + a_6 = 0 \, \text{ and } \, z^2 + bx^2 + cx + d = 0 \]
of degree $3$ and $2$ in $x$. By the division algorithm we get a polynomial equation of degree $1$ in $x$, from which $x$ can be eliminated
\[ x = N/D, \quad \text{$N = c (z^2 + d) + b^2 (y^2 + ay + a_6)$ and $D = b(z^2 + d) + c^2 + b^2 a_4$.} \]
It follows that 
\[ F = K(y,z) \]
and by entering the expression for $x$ into the second polynomial equation, we conclude that the generators $y$ and $z$ satisfy the sextic equation
\[ (z^2 + d) D^2 + b N^2 + c D N = 0. \]
The partial derivatives of the sextic polynomial are equal to $ab^2 c D = a b^2 c (b (z^2 + d) + c^2 + b^2 a_4)$ and $0$. Thus a point $(\bar y, \bar z)$ of the corresponding affine plane sextic curve is singular if and only if $\bar D = 0$, i.e., $b (\bar z^2 + d) = c^2 + b^2 a_4$.

To prove that $g=3$ we can assume that the constant field $K$ is separably closed. We have to show that the primes of $F|K$ different from $\pp$ are non-singular.
We assume, for the sake of contradiction, that there exists a singular prime $\fq$ different from $\pp$.
As $\fq \neq \pp$ the prime $\fq$ is centered in a point of the affine plane sextic curve.
Let $\bar x, \bar y, \bar z \in \ka(\fq)$ be the residue classes of $x$, $y$ and $z$.
As $\fq$ is a singular prime, its center is necessarily a singular point of the sextic curve, i.e., $b (\bar z^2 + d) = c^2 + b^2 a_4$, i.e., $b (b \bar x^2 + c \bar x) = c^2 + b^2 a_4$.
It follows that $\bar x$ and hence $\bar y$ belong to the separably closed constant field $K$.
Hence the prime $\fq_1$ of the elliptic function field $F_1|K$ is rational, and the function $w := x - \bar x$ is a local parameter at $\fq_1$. Note that
\[ z^2 = h w^0 + c w^1 + b w^2 \quad \text{where $h:= d + c \bar x + b \bar x^2 \in K$.} \]
By \cite[Proposition~4.1]{BedSt87}, this implies that $\de(\fq)=0$, a contradiction.

\bigskip

(b) We assume that the $j$-invariant of the first Frobenius pullback $F_1|K = K(x,\y)|K$ is non-zero, and so normalizing $a_1 = 1$ and $a_3 = a_4 = 0$ and abbreviating $a := a_2$ we obtain
\[ \y^2 + x\y = x^3 + a x^2 + \De \quad \text{where $\De \neq 0$ and $j = \De^{-1}$.} \]
Recall that $F=F_1(z)$ where, abbreviating $b:= b_0$, we have
\[ z^2 = b \y^2 + b_1 x\y + b_2 x^2 + b_3 \y + cx + d. \]
As the Laurent series expansion of $\y$ in the local parameter $t:= \frac x\y$ at $\pp_1$ starts as follows
\[ \y = t^{-3} + t^{-2} + a t^{-1} + 0 t^0 + 0 t^1 + \cdots \]
we obtain
\begin{align*}
    z^2 &= b t^{-6} + b_1 t^{-5} + (b + b_2) t^{-4} + (b_1 + b_3) t^{-3} + (a^2 b + b_2 + b_3 + c) t^{-2} \\
    &\qquad + (a^2 b_1 + a b_3 + c) t^{-1} + (a^2 b_2 + a c + d) t^0 + 0 t^1 + \cdots.  
\end{align*}
As $\pp$ is non-rational with $\de(\pp)=2$, it follows from \cite[Proposition~4.1]{BedSt87} that the coefficients of the expansion of $z^2$ of orders $-5$ and $-3$ are zero, i.e., $b_1=b_3=0$.
In turn $c\neq 0$ (see~\eqref{2025_04_08_16:30}).
Now, since the coefficient of order $-1$ is equal to $c\neq 0$ we conclude, again by using \cite[Proposition~4.1]{BedSt87}, that the assumption $\de(\pp)=2$ means that $b\notin K^2$, i.e.,
\begin{equation}\label{2025_03_01_12:30}
    z^2 = b \y^2 + b_2 x^2 + c x + d = b (xy + x^3 + a x^2 + \De) + b_2 x^2 + c x + d,
\end{equation}
where $b \notin K^2$ and $c \neq 0$.

It remains to express the fact that the genus of $F|K$ is equal to $g=3$ in terms of the coefficients $a$, $b$, $c$, $d$ and $\De$.
By Rosenlicht's genus drop formula, as $\ov g = 1$ and $\de(\pp) = 2$ this means that the primes of $F|K$ different from $\pp$ are non-singular.
Eliminating $\y$ from the preceding equation and entering into the elliptic equation $\y^2 + x\y = x^3 + a x^2 + \De$, we conclude that $F = K(x,z)$ where $x$ and $z$ satisfy the sextic equation
\[ z^4 + b x^2 z^2 + b^2 x^6 + (a^2 b^2 + b_2^2 + b b_2) x^4 + b c x^3 + (c^2 + b d) x^2 + (d + b \De)^2 = 0. \]
The primes of $F|K$ different from $\pp$ are the primes centered in the points of the affine plane sextic curve cut out by the above equation.
A singular prime is necessarily centered in a singular point of the curve.
By the Jacobian criterion, a point $(\bar x, \bar z)$ of the affine curve is singular if and only if $\bar x = 0$.
Thus a singular prime of $F|K$ different from $\pp$ is necessarily a zero of $x$. Note that there is only one prime $\fq$ of $F|K$ that is a zero of $x$.
Indeed, as the extension $F|F_1$ is purely inseparable, the uniqueness of $\fq$ follows from the observation that there is only one prime of the elliptic function field $F_1|K = K(x,\y)|K$ that is a zero of $x$, namely the prime $\fq_1$ that corresponds to the point $(0,\De^{1/2})$ of the cubic curve.
We have shown that the assumption that $g=3$ means that the prime $\fq$ is non-singular.

Next we claim that the restriction $\fq_1$ of $\fq$ to the first Frobenius pullback $F_1|K = K(x,\y)|K$ is a rational prime, i.e., $\De \in K^2$, i.e., the $j$-invariant $j = \De^{-1}$ is a square. We assume for the sake of contradiction that $\fq_1$ is non-rational, i.e., $\De\notin K^2$. However, we get the rational prime $\fq_2$ if we restrict $\fq$ to the second Frobenius pullback
\[ F_2|K = K(u,v) \quad \text{where $u:= x^2$, $v:= \y^2$ and $v^2 + uv = u^3 + a^2 u^2 + \De^2$.} \]
Note that the residue classes of $u$ and $v$ modulo $\fq_1$ satisfy $\bar u = 0$ and $\bar v = \De$.
Defining $w := v + \De$ we obtain
\[ F_2|K = K(u,w)|K \quad \text{where $w^2 + uw = u^3 + a^2 u^2 + \De u$.}\]
Moreover $w$ is a local parameter at $\fq_2$, and so the elements of $F_2$ admit formal Laurent series in $w$.
We obtain $u = \De^{-1} w^2 + \De^{-2} w^3 + \cdots$ and so by Formula~\eqref{2025_03_01_12:30}
\begin{align*}
    z^4 &= b^2 (w^2 + \De^2) + b_2^2 u^2 + c^2 u + d^2 \\
    &= (d + b \De)^2 w^0 + (b^2 + c^2 \De^{-1}) w^2 + c^2 \De^{-2} w^3 + \cdots.
\end{align*}
If the residue class $\bar z = (d + b \De)^{1/2}$ of $z$ modulo $\fq$ belongs to the residue field $\ka(\fq_1) = K(\De^{1/2})$, say $\bar z = \al + \beta \De^{1/2}$ where $\al,\beta \in K$, then by replacing $z$ with $z + \al + \be \y$ we can normalize $\bar z = 0$, hence (as $\De \notin K^2$) we deduce $\mathrm{ord}_{\fq_1}(z^2) = 1$, hence $\fq|\fq_1$ is ramified, $z$ is a local parameter at $\fq$ and so $\OO_\fq = \OO_{\fq_1}[z]$.
If the residue class $\bar z \in \ka (\fq)$ does not belong to $\ka (\fq_1)$, then we also have $\OO_\fq = \OO_{\fq_1}[z]$.
In both cases we conclude that $\de(\fq) = 2 \de(\fq_1) + 1 \geq 1$ (see \cite[Theorem~2.3]{BedSt87}), and we get the contradiction that the prime $\fq$ is singular. 

Thus we have shown that $\De$ is a square, say $\De = \etta^2$ for some $\etta \in K^*$. 
Replacing $y$ with $y + \etta$ we renormalize the variable $y$, obtaining
\begin{align*}
    F_1 &= K(x,y),  \quad \text{where $y^2 + xy = x^3 + a x^2 + \eta x$, $\eta \neq 0$, $j = \eta^{-2}$}, \\
    F &= F_1(z),  \quad \text{where $z^2 = b y^2 + b_2 x^2 + c x + d + b \eta^2$}.
\end{align*}
Now $\yy$ is a local parameter at the rational prime $\fq_1$. The power series expansion of $x$ in the parameter $\yy$ starts as follows $x = \etta^{-1} \yy^2 + \etta^{-2} \yy^3 + \cdots $. Thus
\begin{align*}
    z^2 = (d + b \etta^2) \yy^0 + (b + c \etta^{-1}) \yy^2 + c \etta^{-2} \yy^3 + \cdots.
\end{align*}
According to \cite[Proposition~4.1]{BedSt87} the condition $\de(\fq)=0$ is satisfied if and only if $(d + b \etta^2) \in K^2$, in which case we normalize $d = b \etta^2$ by subtracting $(d + b \etta^2)^{1/2}$ from $z$. 
It also follows that $\fq$ is rational if and only if $(b + c \etta^{-1}) \in K^2$.

Finally, transforming
\[ x = x', \quad \y = \y' + \sig x', \quad z' = z, \]
we obtain $a'=a+\sig + \sig^2$, $\De'=\De$, $b_2'=b_2 + \sig^2 b$, $c'=c$, and thus $b_2' + a' b' = b_2 + ab + \sig b$.
Therefore, by taking
$\sig = a + b^{-1} b_2$
we normalize $b_2 = a b$.
\end{proof}

\begin{cor}[Criterion for two function fields to be isomorphic] \label{2025_03_13_19:20}
$ $
\begin{enumerate}[\upshape (a)]
    \item \label{2025_03_13_19:21}
    If $j=0$ then the functions $x$, $y$ and $z$ are uniquely determined up to the transformations
    \[ x = \al^{-4} x' + \sig^2, \quad y = \al^{-6} y' + \sig \al^{-4} + \tau, \quad z = \al z' + \ga x' + \de \]
    where $\al \in K^*$, $\sig, \tau, \ga, \de \in K$, which imply the transformation rules
    \begin{align*}
        \al^{-6} a' &= a, \quad \al^{-8} a_4' = a_4 + \sig a + \sig^4, \quad \al^{-12} a_6' = a_6 + \sig^2 a_4 + \tau a + \sig^6 + \tau^2, \\
        \al^2 b' &= \al^{-8} b + \ga^2, \quad \al^2 d' = d + \sig^4 b + \sig^2 a^{-1} + \de^2.
    \end{align*}
    
    \item \label{2025_03_13_19:22}
    If $j\neq 0$ then the functions $x$, $y$ and $z$ are uniquely determined up to the transformations
    \[ x = x', \quad y = y' + \sig x', \quad z = \al z' + \be y' + \ga x' \]
    where $\al \in K^*$, $\sig, \be,\ga \in K$ and $\be^2 \sig^2 + (b + \be^2) \sig + \ga^2 + a \be^2 = 0$, which imply the transformation rules
    \[ \etta' = \etta, \quad a'= a + \sig + \sig^2, \quad \al^2 b' = b + \be^2, \quad \al^2 c' = c. \]
\end{enumerate}
\end{cor}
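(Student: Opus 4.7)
The plan is to reverse-engineer the proof of Theorem~\ref{2025_01_19_23:30}: there both the elliptic subfield $F_1|K = K(x,y)|K$ and the generator $z$ of $F|F_1$ were pinned down by dimension-theoretic data, so my task now is to exhibit the residual freedom. The work splits into two steps: first determine the freedom in $(x,y)$ via Tate's classical transformation formulae \cite{Tate74}, then determine the freedom in $z$ via the decomposition $H^0(\pp^3) = H^0(\pp_1^3) \oplus Kz$.

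For part \ref{2025_03_13_19:21}, the normalization $a_1 = a_2 = 0$ forces $\varrho = \sigma^2$ in Tate's parametrization, leaving $\mu, \sigma, \tau$ free; reparametrizing $\mu = \alpha^{-2}$ yields the stated transformations of $x, y$ and the transformation rules for $a, a_4, a_6$ as specializations of Tate's general rules. For part \ref{2025_03_13_19:22}, the normalizations $a_1 = 1$, $a_3 = a_4 = a_6 = 0$ (preserved on both sides) force $\mu = 1$ and $\varrho = \tau = 0$, leaving only $\sigma$ free, which instantly yields $\etta' = \etta$ and $a' = a + \sigma + \sigma^2$.

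Turning to $z$: it is unique up to $z = \alpha z' + f$ with $\alpha \in K^*$ and $f \in H^0(\pp_1^3) = K \oplus Kx' \oplus Ky'$, so I write $f = \beta y' + \gamma x' + \delta$. Squaring in characteristic two gives $z^2 = \alpha^2 z'^2 + \beta^2 y'^2 + \gamma^2 x'^2 + \delta^2$, which I compare with $z^2$ expressed via the old normal form after substituting $x$ and $y$ in terms of $x', y'$. The comparison is cleanest in the basis $\{1, x', y', x'^2, x'y', y'^2\}$ of $H^0(\pp_1^6)$, which avoids invoking the Weierstrass relation. In case \ref{2025_03_13_19:21} the absence of a $y'^2$-component in $b x^2 + a^{-1} x + d$ forces $\beta = 0$, and then the three nontrivial coefficient comparisons give $\alpha^2 b' = \alpha^{-8} b + \gamma^2$, confirm $\alpha^{-6} a' = a$, and yield $\alpha^2 d' = \sigma^4 b + \sigma^2 a^{-1} + d + \delta^2$. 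In case \ref{2025_03_13_19:22} matching the constant term gives $\delta = 0$, the $y'^2$ and $x'$ comparisons give $\alpha^2 b' = b + \beta^2$ and $\alpha^2 c' = c$, and the $x'^2$-coefficient, after inserting the already known expressions for $a'$ and $b'$ and using $y = y' + \sigma x'$, collapses to $\beta^2 \sigma^2 + (b + \beta^2)\sigma + \gamma^2 + a \beta^2 = 0$.

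The main obstacle is essentially bookkeeping: identifying the single monomial whose coefficient comparison forces the vanishing of $\beta$ (case \ref{2025_03_13_19:21}) or $\delta$ (case \ref{2025_03_13_19:22}), and carefully tracking the $\alpha$-powers produced by the substitution $x = \alpha^{-4} x' + \sigma^2$ in case \ref{2025_03_13_19:21}. Once these cancellations are pinpointed, the remaining identifications reduce to short linear algebra.
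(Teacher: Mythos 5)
Your proposal is correct and follows essentially the same route as the paper's (one-sentence) proof: both pin down the allowed transformations by the memberships $x\in H^0(\pp_1^2)$, $y\in H^0(\pp_1^3)$, $z\in H^0(\pp^3)$ together with the requirement that the normalized defining equations be preserved, and then read off the coefficient rules by substitution. Your computations check out — in particular the pole-order basis $\{1,x',y',x'^2,x'y',y'^2\}$ of $H^0(\pp_1^6)$ is legitimate and does make the comparisons forcing $\beta=0$ (case (a)) and $\delta=0$ (case (b)) immediate, and the $x'^2$-coefficient in case (b) does collapse to the stated constraint on $\sigma,\beta,\gamma$.
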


\begin{proof}
The transforms of $x$, $y$ and $z$ are linear combinations of $x', y', z', 1$ belonging to $K^* x' + K$, $K^* y' + K x' + K$ and $K^* z' + K y' + K x' + K$ that satisfy the same two equations as $x$, $y$ and $z$, while $x'$, $y'$ and $z'$ satisfy the corresponding equations with modified coefficients $a',b',c',\dots$.
\end{proof}

\begin{prop}\label{2025_02_13_10:00}
Let $F|K$ be a function field as in Theorem~\ref{2025_01_19_23:30}.
\begin{enumerate}[\upshape (a)]
    \item \label{2025_02_13_10:01}
    If $j=0$ then $F|K$ is hyperelliptic, or more precisely, $F|K$ is a quadratic extension of the function field $K(x,z)|K$ of genus zero.
    \item \label{2025_02_13_10:02}
    If $j\neq 0$ then $F|K$ is non-hyperelliptic, or more precisely, $F|K$ is the function field of the plane projective integral regular quartic curve whose projective coordinate functions are up to a proportionality factor equal to the functions $x$, $y$ and $z$, which satisfy the homogeneous quartic equation
    \[ c x^4 + (z^2 + b y^2 + a b x^2)(e(z^2 + b y^2 + a b x^2) + (y^2 + x y + a x^2)) =0, \]
    where $a,b,c,e\in K$ are constants such that $b \notin K^2$ and $(ce)^2 = j^{-1}$.
\end{enumerate}
\end{prop}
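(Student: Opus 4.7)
Part (a). For $j=0$ (Theorem~\ref{2025_01_19_23:30}~\ref{2025_01_19_23:31}), the plan is to exhibit $L := K(x,z)$ as a genus-zero subfield of $F$ with $[F:L]=2$. The relation $z^2 + bx^2 + a^{-1}x + d = 0$ defines a projective conic in $\PP^2_K$; the Jacobian check in characteristic two, using $a\neq 0$, rules out any singular point, so $L|K$ has genus zero. The inequality $[F:L]\le 2$ is immediate since $y$ satisfies the Weierstrass equation over $K(x) \su L$; equality fails because if $y = \al + \be z$ with $\al,\be\in K(x)$, squaring and comparing against $y^2 = ay + x^3 + a_4 x + a_6$ gives $a\be = 0$ in the $z$-coefficient, forcing $\be = 0$ and then $y\in K(x)$, contradicting $[F_1:K(x)]=2$. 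Thus $F|K$ is a quadratic extension of $L$, i.e., hyperelliptic.

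Part (b), setup and irreducibility. For $j\neq 0$ (Theorem~\ref{2025_01_19_23:30}~\ref{2025_01_19_23:32}), set $e := \etta/c$, so $ce = \etta$ and $(ce)^2 = j^{-1}$. The decisive observation is that the defining equation forces $z^2 + by^2 + abx^2 = cx$, collapsing the inner bracket of $Q$ to $cx$; the quartic reduces to $Q(x,y,z) = cx\cdot(x^3 + \etta x + y^2 + xy + ax^2)$, which vanishes by the elliptic equation. For irreducibility of $Q$: a linear factor would force a $K$-linear dependence among $x,y,z$, contradicting their independence in the basis $\{1,x,y,z\}$ of $H^0(\pp^3)$ exhibited in the proof of Theorem~\ref{2025_01_19_23:30}. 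A factorisation into two quadratics would yield a homogeneous relation $Q_1(x,y,z)=0$ in $F$; after eliminating $z^2$ via the defining equation and splitting along $F = F_1 \oplus F_1\cdot z$, the $z$-part forces $\ga x + \eps y = 0$ in $F_1$ (so $\ga = \eps = 0$), and the residual $F_1$-relation, after substituting $y^2 = xy + x^3 + ax^2 + \etta x$, becomes a vanishing $K$-linear combination of the linearly independent elements $x, x^2, x^3, xy$ in $F_1 = K(x) \oplus K(x)\cdot y$, forcing all remaining coefficients to vanish.

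Part (b), function field, regularity, and non-hyperelliptic. Setting $u := x/z$ and $v := y/z$, dividing $z^2 = by^2 + abx^2 + cx$ by $z^2$ yields $z = cu/(1 + bv^2 + abu^2) \in K(u,v)$; hence $F = K(u,v)$, identifying $F$ with the function field of $V(Q)$. For regularity of $V(Q)$, the partial derivatives in characteristic two are $\pa_X Q = YW$, $\pa_Y Q = XW$, $\pa_Z Q \equiv 0$, with $W = Z^2 + bY^2 + abX^2$; the geometric singular locus reduces to the single point $(0:1:b^{1/2})$, corresponding to a closed point $\fq$ of residue field $K(b^{1/2})$. In the affine chart $Y=1$ with $T := Z^2 + b$, a direct expansion gives
\[ Q(X,1,Z) = T(1 + X + aX^2) + eT^2 + abX^2 + abX^3 + (c + a^2 b + ea^2 b^2)X^4. \]
Since $1 + X + aX^2$ is a unit in $\OO_{V(Q),\fq}$, the equation $Q = 0$ places $T$ in $(X^2, T^2) \su \mm_\fq^2$, so $\mm_\fq/\mm_\fq^2$ is spanned by the class of $X$ alone and $\OO_{V(Q),\fq}$ is a DVR. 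Therefore $V(Q)$ is an integral regular plane quartic, and since the canonical image of a hyperelliptic curve of genus three is a conic of degree two rather than a quartic, $F|K$ is non-hyperelliptic.

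The most delicate step will be the regularity at $\fq$: geometrically $\fq$ is singular, so $\mm_\fq$ naively requires two generators $X$ and $T$, and it is the quartic equation itself — specifically the $eT^2$ term together with the unit coefficient $1+X+aX^2$ of $T$ — that forces $T\in \mm_\fq^2$, leaving $X$ as a uniformiser. The quadratic-by-quadratic irreducibility analysis, while routine, also demands careful bookkeeping along the $F_1 \oplus F_1 z$ decomposition.
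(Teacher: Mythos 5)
Your part (a) and the computational core of part (b) follow the paper's own route: the paper likewise observes that $K(x,z)$ has genus zero because its generators satisfy a quadratic relation and that $y$ is quadratic over it, and in (b) it derives the same quartic by setting $v=y/x$, $w=z/x$, eliminating $x$ from $x=c/(w^2+bv^2+ab)$, and substituting into the elliptic relation; your substitution $z^2+by^2+abx^2=cx$ is the same computation organised slightly differently, and your elimination of $z$ via $z=cu/(1+bv^2+abu^2)$ plays the role of the paper's elimination of $x$. What you add beyond the paper's (very terse) proof is welcome and correct: I checked your expansion $Q(X,1,Z)=T(1+X+aX^2)+eT^2+abX^2+abX^3+(c+a^2b+ea^2b^2)X^4$ in the chart $Y=1$ with $T=Z^2+b$, and the Nakayama argument that $T\in\mm^2$ so that $X$ uniformises does establish regularity of the plane quartic at the unique geometrically singular point; this is exactly the "regular but non-smooth" phenomenon the paper leaves implicit. (Two cosmetic points: you call that point $\fq$, whereas in the paper's notation it is the singular prime $\pp$ and $\fq$ denotes the zero of $x$; and the final non-hyperellipticity step tacitly uses adjunction $\omega_{V(Q)}\cong\OO_{V(Q)}(1)$ to identify the given embedding with the canonical map.)

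There is, however, a genuine gap in the irreducibility argument. If $Q=L\cdot C$ with $L$ linear and $C$ cubic, then since $F$ is a field only \emph{one} of $L(x,y,z)$, $C(x,y,z)$ need vanish; your argument only treats the case $L(x,y,z)=0$. The case $C(x,y,z)=0$ is not vacuous a priori: the ten cubic monomials in $x,y,z$ live in the $16$-dimensional space $H^0(\pp^9)$, so their independence is not automatic and must be verified (it does hold -- reducing modulo the relations for $y^2$ and $z^2$ and splitting along $F=F_1\oplus F_1z$ and $F_1=K(x)\oplus K(x)y$ one gets a $6\times 6$ determinant equal to $c^2\neq 0$ -- but this is a computation on the same scale as your quadratic case, not a one-line consequence of the independence of $1,x,y,z$). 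A cleaner repair disposes of all factorisation patterns at once: if $Q$ had a proper irreducible factor $P$ with $P(x,y,z)=0$, then $F=K(v,w)$ would be the function field of an integral plane curve of degree at most $3$, hence of genus at most $1$, contradicting $g=3$. With that replacement (or with the explicit verification that no cubic relation exists), the proposal is complete.
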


\begin{proof}
(a) The function field $K(x,z)|K$ has genus zero because its generators satisfy a quadratic equation, and in particular it is a proper subfield of $F|K$. It is a quadratic subfield, since $F=K(x,y,z)$ and $y$ satisfies a quadratic equation over $K(x,z)$. 

(b) Let $v:= \frac yx$ and $w:= \frac zx$, that is, $(1:v:w)=(x:y:z)$.
Dividing the two equations in the announcement of Theorem~\ref{2025_01_19_23:30}~\ref{2025_01_19_23:32} by $x^2$ we obtain
\begin{align*}
    v^2 + v &= x + a + \etta x^{-1}, \quad \etta\neq 0, \quad j = \etta^{-2}, \\
    w^2 &= b v^2 + a b + c x^{-1}, \quad b \notin K^2, \quad c \neq 0.
\end{align*}
We can eliminate $x$ from the second equation
\[ x = \frac c {w^2 + b v^2 + ab}. \]
As $y = vx$ and $z = wx$ it follows that 
\[ F|K = K(v,w)|K. \]
Entering the expression for $x$ into the first equation, we obtain a quartic equation between $v$ and $w$
\[c^2+\etta(w^2+bv^2+ab)^2+c(w^2+bv^2+ab)(v^2+v+a)= 0. \]
Multiplying with $x^4$ we obtain a homogenous quartic equation between $x$, $y$ and $z$.
To complete the proof we set $e:=\etta c^{-1}$.
\end{proof}

\begin{rem}\label{2025_05_10_23:55}
Let $F|K$ be a function field as in the announcement of Theorem~\ref{2025_01_19_23:30}~\ref{2025_01_19_23:32}, and let $\fq$ be the only zero of $x$.
The prime $\fq$ can be characterized by the intrinsic structure of $F|K$. It is the unique prime of $F|K$ whose restriction $\fq_1$ to the elliptic function field $F_1|K$ is the only element of order two in the abelian group of rational primes of $F_1|K$ with neutral element $\pp_1$.
Indeed, the rational primes of $F_1|K$ different from $\pp_1$ correspond bijectively to the rational points of the affine cubic curve given by the equation $y^2 + xy = x^3 + a x^2 + \etta x$, and a rational point $(\bar x, \bar y)$ of the affine cubic curve has order two if and only if its tangent line passes through the infinite point $(0:1:0)$, i.e., its tangent line is vertical, i.e., $\bar x = 0$, i.e., $(\bar x, \bar y) = (0,0)$.
The prime $\fq$ is centered at the point $(0 : 1 : b^{1/2}+e^{-1/2}) \in \PP^2(\ov K)$.

We now assume that the prime $\fq$ is rational. As mentioned at the end of the proof of Theorem~\ref{2025_01_19_23:30}~\ref{2025_01_19_23:32}, this means that $b+c\etta^{-1}$ is a square. Hence by Corollary~\ref{2025_03_13_19:20}~\ref{2025_03_13_19:22} we can normalize $c = b \eta$, i.e.,
\[ z^2 = b(y^2 + a x^2 + \etta x), \text{ i.e., } z^2 = b(x^3 + xy), \]
by taking $\al=1$, $\beta=(b+c\etta^{-1})^{1/2}$, $\sigma=a\beta^2/(b+\beta^2)$, $\ga=\beta\sigma$, and in turn our freedom to  transform is restricted by the condition $\beta=0$. The functions $x$, $y$ and $z$ are uniquely determined up to the transformations $x=x'$, $y = y' + b^{-1} \ga^2 x'$, $z = \al z' + \ga x'$ where $\al \in K^*$ and $\ga \in K$, which imply the transformation rules
\[ \etta'=\etta, \quad a'= a + b^{-1} \ga^2 + b^{-2} \ga^4, \quad \al^2 b' = b.  \]
The function field $F|K$ is isomorphic to the function field of the plane projective quartic curve defined by the equation 
\[ b^2\etta x^4 + (z^2+by^2+abx^2)(z^2+ bxy) = 0. \]
The rational prime $\fq$ is centered at the point $(0:1:0) \in \PP^2(K)$. 
An example of such a function field will be discussed in Section~\ref{2025_03_13_21:30}.
\end{rem}

\section{A class of geometrically elliptic fibrations by quartic curves}
\label{2025_10_01_15:20}

The function fields in Theorem~\ref{2025_01_19_23:30}~\ref{2025_01_19_23:32} give rise to fibrations by plane projective quartic curves of geometric genus one, via Proposition~\ref{2025_02_13_10:00}~\ref{2025_02_13_10:02}.
Let $k$ be an algebraically closed ground field of characteristic $p=2$. Consider the integral fivefold
\[ Q\su \PP^2 \times \AAA^4 \]
consisting of the pairs $((\bar x:\bar y:\bar z),(\bar a, \bar b,\bar c,\bar e)) \in \PP^2(k) \times \AAA^4(k)$ that satisfy the equation
\[ \bar c \bx^4 + (\bz^2 + \bar b \by^2 + \bar a \bar b \bx^2)( \bar e(\bz^2 + \bar b \by^2 + \bar a \bar b \bx^2) + (\by^2 + \bx \by + \bar a \bx^2)) =0, \]
which is homogeneous of degree $4$ in $\bx,\by,\bz$.
The coordinates are marked with a bar because the letters $x,y,z$ (resp., $a,b,c,e$) will stand for projective (resp. affine) coordinate functions.
The fivefold is smooth by the Jacobian criterion.
The second projection morphism 
\[ \pi:Q \tto \AAA^4 \]
is proper and its fibres are equal to
\[ \pi^{-1} (\ba,\bbb,\bc,\bar e) = Q_{(\ba,\bbb,\bc,\bar e)} \times \{ (\ba,\bbb,\bc,\bar e) \} \su Q \]
where $Q_{(\ba,\bbb,\bc,\bar e)} \su \PP^2(k)$ is the plane projective quartic curve cut out by the defining equation of the fivefold $Q$.
The fibration $\pi:Q \to \AAA^4$ is flat \cite[Theorem~III.9.9]{Har77} and defines a 4-dimensional flat family of plane projective quartic curves.
As the first projection morphism defines isomorphisms
\[ \pi^{-1} (\ba,\bbb,\bc,\bar e) \isom Q_{(\ba,\bbb,\bc,\bar e)}, \]
the fibres are often identified with the corresponding plane quartic curves.

If $\bc = 0$ then the quartic curve $Q_{(\ba,\bbb,\bc,\bar e)}$ consists of an integral component and a non-reduced one, which is a  double line. The two components intersect in two different points.
Now we restrict the base of the fibration to the affine open subset
\[ \AAA^4_c = \{ (\ba,\bbb, \bc,\bar e) \mid \bc\neq0 \} = \AAA^4 \setminus \{\bc=0\}\su \AAA^4. \]
Though the total space $Q_c=\pi^{-1}(\AAA^4_c)$ of the fibration $Q_c\to \AAA^4_c$ is smooth, each fibre $\pi^{-1}(\ba,\bbb,\bc,\bar e)$ with $\bc\neq0$ has over the point $(0:1:\bbb^{1/2})$, or more precisely, at the point $((0:1:\bbb^{1/2}),(\ba,\bbb,\bc,\bar e))$, a singularity of multiplicity two.
By blowing up two times over the singular point we can see that the curve singularity is unibranch of singularity degree $\de=2$.
If $\bc \bar e \neq0$ then the unibranch singular point is the unique singular point of the fibre, and so by Bezout's theorem for plane projective curves the curve is integral, and by the genus-degree formula for plane curves it has arithmetic genus $g =(4-1)(4-2)/2=3$ and geometric genus $\ov g = g -\de = 1$.
Thus if we restrict the base of the fibration to the affine open subset
\[ \AAA^4_{ce} =\{ (\ba,\bbb,\bc,\bar e) \mid \bc\bar e \neq0\} = \AAA^4_c \setminus \{\bar e=0\} \su \AAA^4 \]
then we get a fibration $Q_{ce} \to \AAA^4_{ce}$, whose fibres are integral plane projective quartic curves of geometric genus one.
However, if $\bc\neq0$ but $\bar e=0$, then the fibres are also integral, but they admit over the point $(0:0:1) \in \PP^2$ a node as a second singularity, and so the geometric genus of the fibres is zero, i.e., the fibres are rational curves.

Each fibre of the fibration $Q_c\to \AAA^4_c$ is \emph{strange}; indeed the tangents at the smooth points have a common intersection point, namely $(0:0:1)$.
However, the tangent of the unibranch singularity does not pass through this point; 
it is given by the equation 
$\bz=\bbb^{1/2}\by+\ba^{1/2}\bbb^{1/2}\bx$ 
and intersects the quartic only at the singular point $(0:1:\bbb^{1/2})$.
If $\bar e=0$ then the common intersection point is the node of the fibre, and so, as the fibre is a plane quartic curve, there are no inflection points on this fibre.

Each tangent at a smooth point of a fibre of $Q_{ce} \to \AAA^4_{ce}$ is a bitangent or it passes through the singular point $(0:1:\bbb^{1/2})$, 
and so the fibre, as it is a plane quartic curve, does not admit ordinary inflection points.
However the two tangency points may coincide, that is, the bitangent may become a non-ordinary inflection tangent.
This happens at a point $(\bx:\by:\bz)$ of the quartic if and only if $\by^2 + \bx \by + \ba \bx^2 = 0$, which implies by the defining equations of $Q_{(\ba,\bbb,\bc,\bar e)}$ that $\bz = (\ba^2 \bbb^2   + \bar c \bar e^{-1} )^{1/4} \bx + \bbb^{1/2} \by$.
Thus each fibre of $Q_{ce} \to \AAA^4_{ce}$ has two non-ordinary inflection points.
If $\bar e$ tends to zero then the two inflection points collide at the nodal singularity of the fibre $Q_{(\ba,\bbb,\bc,0)}$.
If $\bc$ tends to zero then the two inflection points approach the two intersection points of the components of the reducible fibre $Q_{(\ba,\bbb,0,\bar e)}$.

For each pair of constants $(\ba,\bar\eta) \in k^2$ we consider the integral plane projective cubic curve
\[ E_{(\ba,\bar \eta)} := \{ (\bx:\by:\bz) \in \PP^2(k) \mid \by^2 \bz + \bx \by \bz = \bx^3 + \ba \bx^2 \bz + \bar \eta \bar x \bz^2 \} \] 
which we mark at the inflection point $(0:1:0)$.
If $\bar \eta \neq0$ then the cubic curve is an elliptic curve with $j$-invariant $\bar \eta^{-2}$.
If $\bar \eta=0$ then it admits a nodal singularity at the point $(0:0:1)$, and so it is a rational curve.
We consider the integral fivefold
\[ E \su \PP^2 \times \AAA^4 \]
consisting of the pairs $((\bx : \by : \bz),(\bar a, \bar b, \bar c, \bar e)) \in \PP^2 \times \AAA^4$ that satisfy the defining equations of $E_{(\bar a, \bar \eta)}$, where we set $\bar\eta = \bar c \bar e$.
The second projection morphism
\[ \eps: E \tto \AAA^4 \]
is proper and flat, its fibres are equal to
\[ \eps^{-1} (\bar a, \bar b, \bar c, \bar e) = E_{(\bar a, \bar c \bar e)} \times \{ (\bar a, \bar b, \bar c, \bar e)\}, \]
hence isomorphic to $E_{(\bar a, \bar c \bar e)}$, and so it is an elliptic fibration.

The function field $F|K=k(Q)|k(\AAA^4)$ of the fibration $Q \overset \pi \to \AAA^4$ is the function field described in Theorem~\ref{2025_01_19_23:30}~\ref{2025_01_19_23:32} and Proposition~\ref{2025_02_13_10:00}~\ref{2025_02_13_10:02}, where $a,b,c,e$ are the coordinate functions  of $\AAA^4$.
Its first Frobenius pullback $F_1|K$ is the function field $k(E)|k(\AAA^4)$ of the elliptic fibration $E \overset \eps\to \AAA^4$. 

\begin{prop}\label{2025_10_26_15:00}
The purely inseparable degree two field extension $k(Q)|k(E)$ defines a morphism
 $Q \to E$ over $\AAA^4$ which induces on the fibres the morphisms
\[ Q_{(\bar a, \bar b, \bar c, \bar e)} \tto E_{(\bar a, \bar c \bar e)} \] 
defined by the assignments 
\[ (\bar x : \bar y : \bar z) \mapsto
(\bar c \bar x^2 : \bar c \bar x \bar y : \bar z^2 + \bar b \bar y^2 + \bar a \bar b \bar x^2) \ \text{ or } \
 (\bar x h : \bar y h : \bar x^3) \]
where $h:= 
\bar e (\bar z^2 + \bar b \bar y^2 + \bar a \bar b \bar x^2) 
+ \bar y^2 + \bar x \bar y + \bar a \bar x^2 .$
The image of the singular point $(0:1:\bar b^{1/2})$ of the quartic is equal to the marked point $(0:1:0)$ of the cubic.
If $\bar c \neq0$ then the morphism $Q_{(\bar a, \bar b, \bar c, \bar e)} \to E_{(\bar a, \bar c \bar e)}$ is a bijection.
\end{prop}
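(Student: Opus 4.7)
My plan rests on rewriting the defining quartic equation of $Q$ in characteristic two as
\[ c x^4 = U h, \quad U := z^2 + b y^2 + a b x^2, \quad h = e U + (y^2 + x y + a x^2). \]
The existence of a morphism $Q \to E$ over $\AAA^4$ is already guaranteed abstractly by the purely inseparable degree-two field inclusion $k(E) = K(x,y) \subset K(x,y,z) = k(Q)$ supplied by Theorem~\ref{2025_01_19_23:30}~\ref{2025_01_19_23:32}; what the proposition asks is an explicit description of this morphism and a fibrewise bijectivity statement. First I would verify that the assignment $(x:y:z) \mapsto (c x^2 : c x y : U)$ sends $Q$ into $E$: substituting $(X, Y, Z) := (c x^2, c x y, U)$ into the elliptic equation $Y^2 Z + X Y Z = X^3 + a X^2 Z + \eta X Z^2$ (with $\eta = c e$) and dividing through by $c^2 x^2$ reduces it to $U(y^2 + x y + a x^2) + e U^2 = c x^4$, which is precisely $U h = c x^4$. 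The second form $(x h : y h : x^3)$ is obtained from the first by multiplication by $h/(c x)$ using $U h = c x^4$; the same identity gives $(c x^2)(x^3) = (x h) U$ and $(c x y)(x^3) = (y h) U$, so the two forms agree on their common domain.

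Second, I would show that the two formulas patch into a morphism on all of $Q$ when $\bar c \neq 0$. The first formula fails only at points with $\bar x = 0$ and $\bar U = 0$, while the second fails only at points with $\bar x = 0$ and $\bar y \bar h = 0$. At any point of $Q$ with $\bar x = 0$ the relation forces $\bar U \bar h = 0$; if both $\bar U$ and $\bar h$ vanished then $\bar y^2 = \bar h - \bar e \bar U = 0$ and then $\bar z^2 = \bar U - \bar b \bar y^2 = 0$, producing the illegal projective point $(0:0:0)$. So exactly one of the two formulas applies at every point where the other degenerates. In particular at the singular point $(0:1:\bar b^{1/2})$ one computes $\bar U = \bar b + \bar b = 0$ and $\bar h = 0 + 1 + 0 + 0 = 1$, so the second form gives $(0:1:0)$, the marked point of $E$.

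The bijectivity of the fibre morphism when $\bar c \neq 0$ I would verify by constructing, for each point $(\bar X : \bar Y : \bar Z) \in E_{(\bar a, \bar c \bar e)}$, a unique preimage on $Q_{(\bar a, \bar b, \bar c, \bar e)}$. If $\bar X \neq 0$, the prescription $\bar x := (\bar X/\bar c)^{1/2}$, $\bar y := \bar Y/(\bar c \bar x)$, $\bar z := (\bar Z + \bar b \bar y^2 + \bar a \bar b \bar x^2)^{1/2}$ yields unique values in the algebraically closed field $k$ of characteristic two; a direct manipulation of the elliptic equation of $E$ shows that this point satisfies $\bar U \bar h = \bar c \bar x^4$ and hence lies on $Q$. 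If $\bar X = 0$, the elliptic equation forces $(\bar X: \bar Y: \bar Z) \in \{(0:1:0), (0:0:1)\}$: the first has the singular point as its unique preimage, the second has as unique preimage the point $(0:1:\bar b^{1/2} + \bar e^{-1/2})$ on $Q$ when $\bar e \neq 0$, or $(0:0:1)$ itself when $\bar e = 0$. I expect the main technical obstacle to be this final preimage verification, because one needs to ensure, via $U h = c x^4$, that the candidate preimages constructed from the elliptic equation actually lie on $Q$.
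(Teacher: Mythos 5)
Your proposal is correct and follows essentially the same route as the paper: the same two coordinate formulas glued along the identity $Uh=c x^4$, the same verification that at $\bar x=\bar U=0$ the point must be $(0:1:\bar b^{1/2})$ with $\bar h=1$, and the same explicit fibrewise inverse for the bijectivity claim (your preimage of $(0:0:1)$ agrees with the paper's $(0:\bar e^{1/2}:1+\bar b^{1/2}\bar e^{1/2})$ after rescaling). The only point to tighten is that your patching step is explicitly restricted to $\bar c\neq 0$, whereas the proposition asserts a morphism on all of $Q$ over $\AAA^4$, including the locus $\bar c=0$, where the first formula also degenerates at points with $\bar x\neq 0$ and $\bar U=0$; there the second formula has last coordinate $\bar x^3\neq 0$, so your case analysis extends immediately, but this case should be stated since the paper uses the morphism on the $\bar c=0$ fibres afterwards.
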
 

\begin{proof}
If we define $(1:v:w) := (x:y:z)$ then as in the proof of Proposition~\ref{2025_02_13_10:00} we get 
\[ (x:y:1) = (c:cv:w^2+bv^2+ab). \]
Thus the field extension $k(Q)|k(E)$, which is equal to $k(v,w)|k(x,y)$, defines the rational map $Q \ttto E$ given by the first assignment of the announcement.
Clearly, this rational map is defined at the points on $Q$ with $\bar z^2 + \bar b \bar y^2 + \bar a \bar b \bar x^2 \neq 0$ or $\bc\bx \neq 0$.
Due to the defining equation of the fivefold $Q$, the rational map is also given by the second assignment,
and so it is also defined at the points with $\bar x \neq 0$ or $\by h \neq 0$. 
If $\bar z^2 + \bar b \bar y^2 + \bar a \bar b \bar x^2 = 0$ and $\bar x = 0$ then the point on the fibre becomes $(0 : 1 : \bar b^{1/2})$, and by the second assignment its image is defined and equal to  $(0:1:0)$. 
Hence the rational map is a morphism $Q \to E$.

If $\bc\neq 0$ then it is easy to check that the morphism $Q_{(\bar a, \bar b, \bar c, \bar e)} \to E_{(\bar a, \bar c \bar e)}$ is bijective and that its inverse is given by the assignment $\qedsymbol$
\[
(\bx:\by:\bz) \mapsto \begin{cases}
(\bx:\by:\bbb^{1/2}\by+\ba^{1/2}\bbb^{1/2}\bx
+\bc^{1/2}\bx^{1/2}\bz^{1/2})  &\text{if } (\bx:\by:\bz) \neq (0:0:1), \\
(0:\bar e^{1/2}:1+\bbb^{1/2}\bar e^{1/2}) &\text{if } (\bx:\by:\bz)=(0:0:1).
\end{cases} \qedhere
\]  
\end{proof} 
If $\bar c=0$ then the integral component $\{h=0\}$ of the quartic $Q_{(\ba,\bbb,0,\bar e)}$ is mapped to the node $(0:0:1)$ of the cubic $E_{(\ba,0)}$, while the restriction of the morphism to the line 
$\{ \bar z=\bbb^{1/2} \by+ \ba^{1/2} \bbb^{1/2} \bx \}$,
which is the support of the non-reduced component,
is the desingularization morphism of the nodal cubic $E_{(\ba,0)}$.

\begin{prop}\label{2025_03_01_20:00}
Let $\bar \eta = \bc \bar e$ and $\bar c \neq 0$. 
Then there is a bijective morphism
\[ E_{(\ba^{1/2},\bar \eta^{1/2})} \tto Q_{(\ba,\bbb,\bc,\bar e)} \]
defined by 
\[ (\bx':\by':\bz')\mapsto 
\begin{cases} 
(\bx'^2:\by'^2:\bbb^{1/2} \by'^2 + \ba^{1/2} \bbb^{1/2} \bx'^2 + \bc^{1/2} \bx' \bz') & \text{if } (\bx':\by':\bz') \neq (0:0:1), \\
 (0:\bar e^{1/2}:1+\bbb^{1/2}\bar e^{1/2}) & \text{if } (\bx':\by':\bz')=(0:0:1).
\end{cases}\]
It is birational, or more precisely, it defines an isomorphism
\[ E_{(\bar a^{1/2},\bar\eta^{1/2})} \setminus \{ (0:1:0) \} \overset\sim\tto Q_{(\bar a, \bar b, \bar c, \bar e)} \setminus \{ (0:1:\bbb^{1/2}) \}. \] 
The composite morphism 
$E_{(\ba^{1/2},\bar\eta^{1/2})}\to Q_{(\ba,\bbb,\bc,\bar e)} \to E_{(\ba,\bar\eta)}$ 
is the Frobenius morphism that raises coordinates to their $p$-th powers (where $p=2$).
\end{prop}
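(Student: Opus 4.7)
The plan is to split the proof into three parts: (i) verify that the explicit formula lands on $Q_{(\bar a, \bar b, \bar c, \bar e)}$ and extends to a genuine morphism, (ii) establish that the composition with the map of Proposition~\ref{2025_10_26_15:00} is the Frobenius, and (iii) deduce bijectivity, birationality, and the isomorphism statement from (ii) combined with Proposition~\ref{2025_10_26_15:00}.

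For (i), I would substitute $\bar x = \bar x'^2$, $\bar y = \bar y'^2$, $\bar z = \bar b^{1/2}\bar y'^2 + \bar a^{1/2}\bar b^{1/2}\bar x'^2 + \bar c^{1/2}\bar x'\bar z'$ directly into the quartic equation of $Q$. The key simplifications are
\[
\bar z^2 + \bar b\bar y^2 + \bar a\bar b\bar x^2 = \bar c\, \bar x'^2\bar z'^2
\]
(immediate in characteristic two) and
\[
\bar x'^2\bar z'^2\bigl(\bar y'^4 + \bar x'^2\bar y'^2 + \bar a\bar x'^4\bigr) = \bar x'^8 + \bar\eta\,\bar x'^4\bar z'^4,
\]
which follows from squaring the defining equation of $E_{(\bar a^{1/2},\bar\eta^{1/2})}$ and absorbing the term $\bar a\bar x'^4\bar z'^2$. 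Plugging these into the quartic and using $\bar\eta = \bar c\bar e$ shows that the left-hand side collapses to zero. The first formula is regular except where $\bar x' = \bar y' = 0$, i.e.\ at the single point $(0{:}0{:}1)$. To show the extension by the second formula is the correct one, I would work in the affine chart $\bar z' = 1$, use the elliptic equation to substitute $\bar y'^2 = \bar x'\bar y' + \bar x'^3 + \bar a^{1/2}\bar x'^2 + \bar\eta^{1/2}\bar x'$, divide the three coordinates by $\bar x'$, and take the limit as $(\bar x',\bar y') \to (0,0)$: the resulting point is $(0:\bar\eta^{1/2}:\bar b^{1/2}\bar\eta^{1/2}+\bar c^{1/2})$, which after factoring $\bar c^{1/2}$ and using $\bar\eta^{1/2}=\bar c^{1/2}\bar e^{1/2}$ is $(0:\bar e^{1/2}:1+\bar b^{1/2}\bar e^{1/2})$, exactly the second assignment.

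For (ii), I would apply the formula from Proposition~\ref{2025_10_26_15:00} to the image $(\bar x'^2 : \bar y'^2 : \bar z'')$, where $\bar z'' := \bar b^{1/2}\bar y'^2 + \bar a^{1/2}\bar b^{1/2}\bar x'^2 + \bar c^{1/2}\bar x'\bar z'$. The first two coordinates are $\bar c\bar x'^4$ and $\bar c\bar x'^2\bar y'^2$; for the third, squaring gives $\bar z''^2 = \bar b\bar y'^4 + \bar a\bar b\bar x'^4 + \bar c\bar x'^2\bar z'^2$, so $\bar z''^2 + \bar b\bar y'^4 + \bar a\bar b\bar x'^4 = \bar c\bar x'^2\bar z'^2$. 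Factoring out $\bar c\bar x'^2$ yields $(\bar x'^2:\bar y'^2:\bar z'^2)$, which is the Frobenius map from $E_{(\bar a^{1/2},\bar\eta^{1/2})}$ to $E_{(\bar a,\bar\eta)}$. The two exceptional points $(0{:}1{:}0)$ and $(0{:}0{:}1)$ need to be checked by hand: the former maps via $(0{:}1{:}\bar b^{1/2})$ (the non-smooth point of $Q$) to $(0{:}1{:}0)$, and the latter maps via $(0{:}\bar e^{1/2}{:}1+\bar b^{1/2}\bar e^{1/2})$ to $(0{:}0{:}1)$, both consistent with Frobenius.

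For (iii), since Frobenius has degree $p=2$ and the map $Q\to E$ of Proposition~\ref{2025_10_26_15:00} is purely inseparable of degree $2$, the morphism $E_{(\bar a^{1/2},\bar\eta^{1/2})}\to Q_{(\bar a,\bar b,\bar c,\bar e)}$ has degree $1$ on function fields, hence is birational. Bijectivity on $k$-points follows formally: Frobenius is bijective over the algebraically closed field $k$, and the map of Proposition~\ref{2025_10_26_15:00} is bijective, so the first factor in their composition must also be bijective. For the last assertion, a birational bijective morphism from $E_{(\bar a^{1/2},\bar\eta^{1/2})}$ to the (possibly singular) quartic $Q_{(\bar a,\bar b,\bar c,\bar e)}$ is an isomorphism over the locus where $Q$ is normal. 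The only obstruction is the unibranch singularity $(0{:}1{:}\bar b^{1/2})$, which the computation above identifies as the image of $(0{:}1{:}0)$; restricting the domain and codomain to the complements of these points gives the desired isomorphism. The main obstacle in the whole proof is the local computation at $(0{:}0{:}1)$ in step (i), which requires care to extract the correct limit, but otherwise the argument is a chain of direct substitutions and degree bookkeeping.
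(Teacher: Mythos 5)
Your argument runs in the opposite direction from the paper's. You verify the displayed formula by substituting it into the quartic equation and then extract birationality, bijectivity and the isomorphism from degree bookkeeping through the Frobenius factorization; the paper instead \emph{constructs} the map as the composite of the Frobenius $E_{(\ba^{1/2},\bar\eta^{1/2})}\to E_{(\ba,\bar\eta)}$ with the set-theoretic inverse of the bijection of Proposition~\ref{2025_10_26_15:00}, and proves the isomorphism on the complements by exhibiting the inverse rational map
\[ (\bx:\by:\bz)\mapsto(\bc^{1/2}\bx:\bc^{1/2}(\bx\by)^{1/2}:\bz+\bbb^{1/2}\by+\ba^{1/2}\bbb^{1/2}\bx) \]
and checking, after rewriting $(\bx\by)^{1/2}$ as a rational expression via the quartic equation, that it is regular away from $(0:1:\bbb^{1/2})$. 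Your substitution identities in (i) are correct, your handling of the point $(0:0:1)$ coincides with the paper's computation, and (ii) is fine. One small caveat: in (iii) you need the fibrewise morphism $Q_{(\ba,\bbb,\bc,\bar e)}\to E_{(\ba,\bar\eta)}$ to have degree $2$ on function fields, whereas Proposition~\ref{2025_10_26_15:00} asserts this only for the extension $k(Q)|k(E)$ of the total spaces; bijectivity alone does not give degree $2$ (normalizations of unibranch curves are bijective and birational), so a one-line fibrewise computation should be supplied.

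More seriously, your derivation of the isomorphism on the complements relies on $Q_{(\ba,\bbb,\bc,\bar e)}\setminus\{(0:1:\bbb^{1/2})\}$ being normal (``the only obstruction is the unibranch singularity''). This holds only when $\bar e\neq 0$. The proposition assumes only $\bc\neq 0$, and when $\bar e=0$ the quartic acquires a node at $(0:0:1)$ (and $E_{(\ba^{1/2},0)}$ is the nodal cubic), so Zariski's main theorem does not apply at that point and your argument yields the isomorphism only away from the node. The conclusion is still true there --- the paper's explicit inverse is regular at the node --- but your proof does not establish it. Either add the hypothesis $\bc\bar e\neq 0$ (the case actually used in Corollary~\ref{2025_05_11_00:45}) and treat $\bar e=0$ separately, or follow the paper and write down the inverse morphism explicitly.
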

\begin{proof} 
The inverse of the bijective morphism 
$ Q_{(\ba,\bbb,\bc,\bar e)} \to E_{(\ba,\bar \eta)} $ 
of Proposition~\ref{2025_10_26_15:00}
is not a morphism and not even a rational map. However, composing it with the Frobenius morphism 
$E_{(\bar a^{1/2},\bar\eta^{1/2})} 
{\to} E_{(\ba,\bar\eta)}$
we get the map $E_{(\ba^{1/2},\bar\eta^{1/2})} \to Q_{(\ba,\bbb,\bc,\bar e)}$ 
of the announcement. This is a rational map. To prove that it is a morphism, it remains to check that it is defined at the point $(0:0:1)$.
Taking $\bar z' = 1$ and using that $\bar y'^2 = \bar x' \bar y' + \bar x'^3 + \bar a^{1/2} \bar x'^2 + \bar \eta^{1/2} \bar x'$ we can divide the homogeneous coordinates of the image points by $\bar x'$, to conclude that the rational map is also given by the assignment
\[ (\bar x' : \bar y' : 1) \mapsto 
(\bar x' : \bar y' + \bar x'^2 {+} \bar a^{1/2} \bar x' + \bar \eta^{1/2} : \bar b^{1/2} \bar y' + \bar b^{1/2} \bar x'^2 + \bar b^{1/2} \bar \eta^{1/2} + \bar c^{1/2} ). \] 
It now follows that the map 
$E_{(\ba^{1/2},\bar\eta^{1/2})} \to Q_{(\ba,\bbb,\bc,\bar e)}$ 
is a morphism and that the image of the point $(0:0:1)$ is equal to 
$(0 : \bar e^{1/2} : 1+\bar b^{1/2}\bar e^{1/2}).$

By construction, the composite morphism
$  E_{(\bar a^{1/2},\bar \eta^{1/2})} \to Q_{(\bar a, \bar b, \bar c, \bar e)} \to E_{(\bar a, \bar \eta)} $ 
is the Frobenius morphism defined by $(\bar x' : \bar y' : \bar z') \mapsto (\bar x'^2 : \bar y'^2 : \bar z'^2)$.
Thus the inverse map
\[ Q_{(\bar a, \bar b, \bar c, \bar e)} \ttto E_{(\bar a^{1/2},\bar \eta^{1/2})} \]
is given by $(\bar x : \bar y : \bar z) \mapsto (\bar c^{1/2} \bar x : \bar c^{1/2} (\bar x \bar y)^{1/2} : \bar z + \bar b^{1/2} \bar y + \bar a^{1/2} \bar b^{1/2} \bar x )$.
In order to check that it is indeed a rational map and that it is regular at each point except $(0 : 1 : b^{1/2})$, we eliminate the product $\bar x \bar y$ from the defining equation of $Q$ and write $(\bar x \bar y)^{1/2}$ as a rational expression in $\bar x$ and $\bar y$:
\[ (\bar x \bar y)^{1/2} = \frac{\bar c^{1/2} \bar x^2}{\bar z + \bar b^{1/2} \bar y + \bar a^{1/2} \bar b^{1/2} \bar x} + \bar y + \bar a^{1/2} \bar x + \bar e^{1/2} (\bar z + \bar b^{1/2} \bar y + \bar a^{1/2} \bar b^{1/2} \bar x). \qedhere \]
\end{proof}

\begin{cor}\label{2025_05_11_00:45}
Assume that $\bar \eta = \bar c \bar e \neq 0$, i.e., the point
$(0 : 1 : \bar b^{1/2})$ is the unique singular point of
the quartic curve $Q_{(\bar a, \bar b, \bar c, \bar e)}$. 
Then $E_{(\bar a^{1/2},\bar \eta^{1/2})} \to Q_{(\bar a, \bar b, \bar c, \bar e)}$ is the desingularization morphism of $Q_{(\bar a, \bar b, \bar c, \bar e)}$.
\end{cor}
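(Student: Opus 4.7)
The plan is to deduce the corollary from Proposition~\ref{2025_03_01_20:00} by invoking the standard characterization of the desingularization morphism of a projective curve as a birational bijective morphism whose source is smooth and projective.

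First I would observe that since $\bar\eta = \bc\bar e \neq 0$ we have $\bar\eta^{1/2} \neq 0$, and hence the curve $E_{(\ba^{1/2},\bar\eta^{1/2})}$ is an elliptic curve (with $j$-invariant $\bar\eta^{-1}$), in particular a smooth projective integral curve. Next I would invoke Proposition~\ref{2025_03_01_20:00}, which on the one hand produces a bijective morphism $E_{(\ba^{1/2},\bar\eta^{1/2})} \to Q_{(\ba,\bbb,\bc,\bar e)}$, and on the other hand shows that its restriction $E_{(\ba^{1/2},\bar\eta^{1/2})} \setminus \{(0:1:0)\} \to Q_{(\ba,\bbb,\bc,\bar e)} \setminus \{(0:1:\bbb^{1/2})\}$ is an isomorphism. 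In particular the morphism is birational.

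The final step is to apply the following general fact: a birational morphism from a smooth projective curve to an integral projective curve factors uniquely through the normalization of the target, by its universal property. In our situation, the induced map from $E_{(\ba^{1/2},\bar\eta^{1/2})}$ to the normalization $\wt{Q}_{(\ba,\bbb,\bc,\bar e)}$ is a birational, bijective morphism between smooth projective curves, hence an isomorphism. Therefore the morphism $E_{(\ba^{1/2},\bar\eta^{1/2})} \to Q_{(\ba,\bbb,\bc,\bar e)}$ itself is (up to this canonical identification) the desingularization morphism. There is no substantial obstacle here since the geometric content has already been worked out in Proposition~\ref{2025_03_01_20:00}; the only content of the corollary is the recognition that a birational bijective morphism from a smooth projective curve to a projective curve with a single singular point is precisely its normalization.
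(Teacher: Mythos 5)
Your proposal is correct and follows exactly the route the paper intends: the corollary is stated without a separate proof because it is immediate from Proposition~\ref{2025_03_01_20:00} once one notes that $\bar\eta^{1/2}\neq 0$ makes the source a smooth projective curve, so the bijective birational morphism is the normalization by its universal property. Your write-up simply makes explicit the standard argument the authors leave implicit.
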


To make a more detailed study of the singularity on the quartic $Q_{(\ba,\bbb,\bc,\bar e)}$ let $x'$, $y'$ and $z'$ be the projective coordinate functions of the cubic $E_{(\bar a^{1/2}, \bar \eta^{1/2})}$.
Then $t' := \frac{x'}{y'}$ is a local parameter of the point $(0:1:0)$ and the formal expansion of $s' := \frac{z'}{y'}$ starts as follows
\[ s' = t'^3 + t'^4 + (1+\ba^{1/2}) t'^5 + t'^6 + (1 + \ba^{1/2} + \ba + \bar \eta^{1/2}) t'^7 + \cdots \]
(see \cite[Formula 13]{Tate74}).
As $(x' : y' : z') = (t' : 1 : s')$ the morphism $E_{(\bar a^{1/2}, \bar \eta^{1/2})} \to Q_{(\bar a, \bar b, \bar c, \bar e)}$ is also given by the assignment
\[ (\bar t' : 1 : \bar s') \mapsto (\bar t'^2 : 1 : \bbb^{1/2} + \ba^{1/2} \bbb^{1/2} \bar t'^2 + \bc^{1/2} \bar t' \bar s' ), \]
and so the formal expansion of the singularity at $(0 : 1 : \bar b^{1/2})$ starts as follows
\[  (\bar t'^2 : 1 : \bbb^{1/2} + \ba^{1/2} \bbb^{1/2} \bar t'^2 + \bc^{1/2} \bar t'^4 + \bc^{1/2} \bar t'^5 + \cdots ). \]
In this way one can see again that the singularity is unibranch of multiplicity two and of singularity degree two.

If $\bar\eta = \bar c \bar e \neq 0$ then the cubic $E_{(\bar a,\bar\eta)}$ is an elliptic curve, and so it is an abelian group whose neutral element is the marked point $(0:1:0)$. 
Via the bijection 
$Q_{(\bar a, \bar b, \bar c, \bar e)} \leftrightarrow E_{(\ba,\bar \eta)}$ 
or
$E_{(\ba^{1/2},\bar \eta^{1/2})}\leftrightarrow Q_{(\bar a, \bar b, \bar c, \bar e)}$ 
of Proposition~\ref{2025_10_26_15:00} and \ref{2025_03_01_20:00}, 
the quartic $Q_{(\bar a,\bar b,\bar c,\bar e)}$ becomes an abelian group whose neutral element is its unique singular point $(0:1:\bar b^{1/2})$.

\begin{prop}\label{2025_10_13_16:15}
Assume that $\bar\eta = \bar c \bar e \neq 0$.
\begin{enumerate}[\upshape(i)]
\item
The abelian group $Q_{(\bar a,\bar b,\bar c,\bar e)}$ has a unique element of order two, which is its unique smooth point 
$(0:1:\bar b^{1/2}+\bar e^{-1/2})$ on the line $\{\bx=0\}$ spanned by the singular point $(0:1:\bar b^{1/2})$ and the strange point $(0:0:1)$. 

If $\bar e$ tends to zero then the element of order two approaches the node of the quartic $Q_{(\ba,\bbb,\bc,0)}$. 
If $\bc$ tends to zero then the element of order two becomes the unique intersection point of the line $\{\bx=0\}$ and the integral component of $Q_{(\ba,\bbb,0,\bar e)}$.

\item
The elements of order four of the quartic $Q_{(\bar a, \bar b, \bar c, \bar e)}$ are precisely the two (non-ordinary) inflection points. 
\end{enumerate}
\end{prop}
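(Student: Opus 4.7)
The plan is to transport everything to the ordinary elliptic curve $E := E_{(\bar a, \bar\eta)}$ via the bijection of Proposition~\ref{2025_10_26_15:00}, which by construction carries the group law on $Q_{(\bar a,\bar b,\bar c,\bar e)}$ to the standard group law on $E$ with origin $(0:1:0)$. Since $j(E) = \bar\eta^{-2} \neq 0$, the elliptic curve is ordinary, so $E[2] \cong \Z/2\Z$ and $E[4] \cong \Z/4\Z$; in particular there is exactly one non-identity element of order two and exactly two elements of exact order four. This reduces both (i) and (ii) to the task of identifying, under the explicit formulas of Propositions~\ref{2025_10_26_15:00} and~\ref{2025_03_01_20:00}, which points of $Q$ correspond to the relevant torsion elements of $E$.

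For (i), Remark~\ref{2025_05_10_23:55} identifies the element of order two of $E$ as the affine origin $(0:0:1)$. Chasing this point through the explicit inverse map given at the end of the proof of Proposition~\ref{2025_10_26_15:00} yields the point $(0:\bar e^{1/2}:1+\bar b^{1/2}\bar e^{1/2}) = (0:1:\bar b^{1/2}+\bar e^{-1/2})$ on $Q$, which must therefore be the unique element of order two. To identify it as the unique smooth point on the line $\{\bx=0\}$, I would restrict the defining quartic equation to $\bx=0$: it factors as $(\bz^2+\bbb\by^2)(\bar e(\bz^2+\bbb\by^2)+\by^2)=0$, and reading off the two degree-two factors (in characteristic two) shows that the only intersections with this line are the singular point $(0:1:\bbb^{1/2})$ and the smooth point $(0:1:\bbb^{1/2}+\bar e^{-1/2})$. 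For the limit statements, I would renormalize the point as $(0:\bar e^{1/2}:1+\bbb^{1/2}\bar e^{1/2})$ to take the limit $\bar e \to 0$ (yielding the node $(0:0:1)$ of $Q_{(\ba,\bbb,\bc,0)}$), and note that the coordinates are independent of $\bc$, while a direct substitution verifies that the point satisfies $\bar e(\bz^2+\bbb\by^2+\ba\bbb\bx^2) + \by^2 + \bx\by + \ba\bx^2 = 0$, i.e., it lies on the integral component of $Q_{(\ba,\bbb,0,\bar e)}$.

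For (ii), since $E[4]$ contains exactly two elements of exact order four, it suffices to show that the two inflection points of $Q$ have order four. An inflection point takes the form $P = (1:u:w)$ with $u^2 + u + \ba = 0$ and $w = (\ba^2\bbb^2 + \bc/\bar e)^{1/4} + \bbb^{1/2}u$. Pushing $P$ through the morphism of Proposition~\ref{2025_10_26_15:00} and simplifying using $u^2 = u + \ba$, the image on $E$ lands at the point with affine coordinates $(\bar\eta^{1/2}, \bar\eta^{1/2}u)$. A short tangent-and-chord calculation on the Weierstrass model $\by^2 + \bx\by = \bx^3 + \ba\bx^2 + \bar\eta\bx$ in characteristic two shows that the tangent slope at this point is $\mu = (y_0 + x_0^2 + \bar\eta)/x_0 = u$, and the standard doubling formula then gives $\bx$-coordinate $\mu^2 + \mu + \ba = u^2 + u + \ba = 0$ for $2P$. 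Hence $2P = (0,0)$, the unique element of order two, so $P$ has order exactly four.

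The main obstacle I foresee is not conceptual but computational: the characteristic-two bookkeeping in the simplification that reduces the image of an inflection point to $(\bar\eta^{1/2}, \bar\eta^{1/2}u)$, combined with the doubling formula on $E$, requires some care with the fourth roots appearing in the inflection data. Everything else reduces to direct substitution into the explicit birational formulas supplied by Propositions~\ref{2025_10_26_15:00} and~\ref{2025_03_01_20:00}.
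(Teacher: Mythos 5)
Your proposal is correct, and the overall strategy coincides with the paper's: both parts are proved by transporting the group law to the elliptic curve $E_{(\bar a,\bar\eta)}$ through the explicit bijections of Propositions~\ref{2025_10_26_15:00} and~\ref{2025_03_01_20:00}, identifying $(0:0:1)$ as the unique $2$-torsion point and pulling it back to $(0:1:\bar b^{1/2}+\bar e^{-1/2})$. The only real divergence is in part (ii): you count the order-four elements via $E[4]\cong\Z/4\Z$ for an ordinary curve and then verify by an explicit pushforward-and-doubling computation that each inflection point lands on a point of order four, whereas the paper characterizes the order-four points of the cubic geometrically as those whose tangent passes through the $2$-torsion point $(0:0:1)$, i.e.\ $\bar y^2+\bar x\bar y+\bar a\bar x^2=0$, and observes that this condition transfers verbatim to the quartic because the morphism preserves the ratio $\bar x:\bar y$ --- which is exactly the defining condition of the non-ordinary inflection points. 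The paper's version avoids the doubling formula and the fourth roots entirely; your version requires the computation you flag as the ``main obstacle,'' but that computation does go through (the image of an inflection point is indeed $(\bar\eta^{1/2},\bar\eta^{1/2}u)$ with $u^2+u+\bar a=0$, the tangent slope there is $u$, and the doubling formula gives $x(2P)=u^2+u+\bar a=0$). Your explicit treatment of the restriction to the line $\{\bar x=0\}$ and of the two limits in (i) is also correct and in fact more detailed than what the paper writes.
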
 


\begin{proof} 
(i) The point $(0:0:1)$ is the unique point of the elliptic curve $E_{(\ba,\bar \eta)}$ that is different from the neutral element $(0:1:0)$ and whose tangent line passes through the neutral element, 
and so it is the unique element of $E_{(\ba,\bar \eta)}$ of order two. The corresponding point of the quartic is equal to 
$(0:1:\bar b^{1/2}+\bar e^{-1/2})$ 
(see also Remark~\ref{2025_05_10_23:55}). 

(ii) A point $(\bar x : \bar y : \bar z)$ on the cubic $E_{(\ba,\bar \eta)}$ has order four if and only if it is different from the point $(0:0:1)$ of order two, and its tangent line passes through this point, i.e., $\bx\neq 0$ and $\bar y^2 + \bar x \bar y + \bar a \bar x^2 = 0$. 
By Proposition~\ref{2025_10_26_15:00}, as $\bx\neq 0$ the first two homogeneous coordinates of the corresponding points of the quartic 
$Q_{(\bar a,\bar b,\bar c,\bar e)}$
are up to proportionality equal to $\bx$ and $\by$. 
Hence a point $(\bar x : \bar y : \bar z)$ of the quartic 
 has order four if and only if 
$\bar y^2 + \bar x \bar y + \bar a \bar x^2=0$. 
The condition $\bx\neq 0$ becomes redundant, because $\bx=0$ would imply $\by=\bz
=0$.
Hence a point of the quartic has order four if and only if it is a (non-ordinary) inflection point.
\end{proof}

Let $F|K=K(x,y,z)|K$ be a function field as in Proposition~\ref{2025_02_13_10:00}~\ref{2025_02_13_10:02}, that is, 
\[c x^4 + (z^2 + b y^2 + ab x^2) (e (z^2 + b y^2 + a b x^2) +y^2+xy+ax^2) = 0,\]  
$\mathrm{char}(K)=2, \
a \in K, \,\ b \in K\setminus K^2 \text{ and } c,e \in K^*.$  
Assume that $K$ is the field $k(B)$ of rational functions on an algebraic variety $B$ defined over the ground field $k$. 
We want to realize $F|K$ as the function field of some fibration over the variety $B$, i.e., 
we look for an algebraic variety $T$ and a proper surjective morphism $\Phi: T \to B$ whose function field $k(T)|k(B)$ is equal to $F|K$. The function field  determines the fibration $\Phi: T \to B$ uniquely up to birational equivalence, 
where a second fibration $\breve \Phi : \breve T \to \breve B$ is called birationally equivalent to $\Phi$ if and only if there is a birational map $\breve  T \ttto T$ between the total spaces together with a birational map $\breve B \ttto B$ between the bases such that the square
\[
\begin{tikzcd}
    \breve T \ar[r,dashed] \ar[d,"\breve \Phi"] & T \ar[d,"\Phi"] \\
    \breve B \ar[r,dashed]  & B
\end{tikzcd}
\]
commutes. 

By restricting if necessary the variety $B$ to a dense open subset, we assume that the rational functions $a,b,c,e \in k(B)$ are regular functions on $B$.
Thus there is given an algebraic variety $B$ equipped with four regular functions $a,b,c,e \in k[B]$ that satisfy $c\neq 0, e\neq 0$ and $b\notin k(B)^2$. 
Now we are able to define a fibration 
\[ \Phi_B: T_B \tto B \]
with function field $k(T_B)|k(B) = F|K$ as follows.
The total space is the hypersurface
$ T_B \su \PP^2(k) \times B  $ 
cut out by the defining equation of the function field $F|K$, i.e., 
\[ T_B :=\{((\bar x:\bar y:\bar z),P)\in \PP^2(k)\times B \mid c(P) \bar x^4 + \dots =0 \}.\]
The morphism $\Phi_B: T_B \tto B$ is defined to be the second projection morphism.
This is a fibration by plane projective quartic curves with function field $F|K$
 whose fibers 
 \[ \Phi_B^{-1}(P) \cong Q_{(a(P),b(P),c(P),e(P))}\]
 have been studied in the first part of this section. 
The moving singularity of the fibration is the prime divisor of $T_B$ consisting of the points $((0:1:b(P)^{1/2}),P)$ where $P\in B$. 

As $b$ is assumed to be a regular function on $B$ and as $b\notin k(B)^2$ and hence $b$ is non-constant, the base $B$ cannot be a proper variety.
However there are possibilities to enlarge the base.
\begin{rem}
 If we cancel our assumption that the rational function $a,b,c,e$ are regular on $B$, then the fibre of a point $P \in B$ can be still defined if the fractional ideal of the local ring $\OO_P$ generated by the coefficients of the quartic polynomial is principal, i.e.,
\[ \OO_P \langle 1,a,b,ab, c + a^2 b + a^2 b^2, e, b^2 e  \rangle = \OO_P h_P \quad \text{for some $h_P \in K^*$.} \]
In this case the fibre of the point $P$ is given by the homogeneous quartic polynomial expression $ (c x^4+ \dots)/h_P \in \OO_P[x,y,z]$.
In particular, if the base $B$ is a smooth curve (as in Section~\ref{2025_03_13_21:30}) then, as its local rings are principal ideal domains, the fibre of each point is defined. 
 \end{rem} 
Our assumption that the functions $a,b,c$ and $e$ are regular means that $h_P=1$ for each $P\in B$. 
If $B$ is a closed subset of $\AAA^4$ equipped with its four coordinate functions, then $\Phi_B$ is a closed subfibration of the fibration $\pi: Q\to \AAA^4$ defined at the beginning of this section. In Section~\ref{2025_03_13_21:30} we will study the pencil of quartics whose base $B$ is the hyperbola 
in $\AAA^4$ cut out by the equations $a=0$, $c=1$ and $be=1$.

\begin{prop} 
The fibration $\Phi_B:T_B\to B$ is a dominant base extension of a closed subfibration of $\pi: Q \to \AAA^4$.
\end{prop}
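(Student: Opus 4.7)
The plan is to exhibit $\Phi_B: T_B \to B$ literally as a fibre product. I would first use the four regular functions $a,b,c,e \in k[B]$ to define the morphism
\[
\psi: B \tto \AAA^4, \qquad P \mapsto (a(P),b(P),c(P),e(P)),
\]
and then let $B' \su \AAA^4$ be the scheme-theoretic image of $\psi$, i.e.\ the closed subvariety of $\AAA^4$ cut out by the kernel of the comorphism $k[\ba,\bbb,\bc,\bar e]\to k[B]$. Because $B$ is an irreducible variety, $B'$ is an irreducible closed subvariety of $\AAA^4$, and $\psi$ factors through $B'$ as
\[
B \xrightarrow{\ \tilde\psi\ } B' \ttoinj \AAA^4,
\]
with $\tilde\psi: B\to B'$ dominant by construction.

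Next I would form the evident closed subfibration of $\pi:Q\to\AAA^4$ over $B'$, namely
\[
\pi': Q' := \pi^{-1}(B') = Q \cap \bigl(\PP^2\times B'\bigr) \tto B',
\]
obtained by restricting $\pi$ along the closed immersion $B'\toinj\AAA^4$. This is a closed subfibration of $\pi$ in the obvious sense, cut out inside $\PP^2\times B'$ by the same homogeneous quartic equation that defines the fivefold $Q$ (since $Q$ itself was defined as a closed subscheme of $\PP^2\times\AAA^4$ by that equation).

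The remaining step is to identify
\[
T_B \;\isom\; Q' \times_{B'} B
\]
as fibrations over $B$, via the universal property of the fibre product. A point of $Q'\times_{B'}B$ is a pair consisting of $((\bx:\by:\bz),\psi(P))\in Q'$ and $P\in B$ with matching images in $B'$, and this corresponds bijectively, and scheme-theoretically, to a point $((\bx:\by:\bz),P)\in\PP^2\times B$ satisfying $\bc(P)\bx^4 + \dots = 0$ with coefficients $(a(P),b(P),c(P),e(P))$. This is precisely the defining condition of $T_B\su\PP^2\times B$, and under the identification the second projection becomes $\Phi_B$. Hence $\Phi_B$ is the base extension of the closed subfibration $\pi':Q'\to B'$ along the dominant morphism $\tilde\psi: B\to B'$.

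I do not expect any substantive obstacle: the statement is essentially a tautology, since $T_B$ was defined by restricting the universal defining equation of $Q$ from the coordinate functions on $\AAA^4$ to their pullbacks $a,b,c,e$ on $B$. The only subtlety worth flagging is the distinction between $\psi:B\to\AAA^4$ (which need not be dominant onto all of $\AAA^4$) and its factorization $\tilde\psi:B\to B'$ through the closed image, which is what makes the word ``dominant'' in the statement meaningful.
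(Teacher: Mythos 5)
Your proposal is correct and follows essentially the same route as the paper: the paper also defines the closed subvariety $\check B\subseteq\AAA^4$ as the vanishing locus of the polynomials that vanish on $(a,b,c,e)$ (equivalently, the scheme-theoretic image of your $\psi$, cut out by the kernel of $k[\ba,\bbb,\bc,\bar e]\to k[B]$), observes that $B\to\check B$ is dominant, and concludes that $\Phi_B$ is the base extension of the closed subfibration over $\check B$. Your explicit verification of the fibre-product identification $T_B\isom Q'\times_{B'}B$ is a detail the paper leaves implicit, but it is the same argument.
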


\begin{proof} 
Let $\check B$ be the closed subset of $\AAA^4$ corresponding to the integral $k$-algebra $k[a,b,c,e]$, or more precisely, let $\check B\subseteq\AAA^4$ be the locus of the polynomials in four variables that vanish on $(a,b,c,e)$, and let $\check a,\check b, \check c, \check e$ be its coordinate functions.
As the regular functions $a,b,c,e$ define a dominant morphism $B\to \check B$, the fibration $\Phi_B$ is a dominant base extension of the closed subfibration $\Phi_{\check B}$  
of the fibration $\pi: Q\to\AAA^4$.
\end{proof}

Note that the condition $ce\neq 0$ is invariant under dominant base extensions, and so it is equivalent to $\check{c} \check{e} \neq 0.$ 
Moreover, the condition $b \notin k(B)^2$ implies $\check b \notin k(\check B)^2$.
The converse is true if the dominant morphism $B\to \check B$ is separable. 

By restricting the base of the fibration $\Phi_B: T_B\to B$ to a dense open subset of $B$, we will arrange that the total space becomes smooth and that the special fibres behave like the geometric generic fibre. 
To this end, 
we note that the condition $b \notin k(B)^2$ means that the differential $d_{B|k}(b)$ is non-zero.

\begin{prop}
If the base of the fibration $\Phi_B\to B$ is restricted to the dense open subset 
$B_{ce}=\{P\in B\mid c(P)\neq 0,\, e(P)\neq 0\}$, 
then the fibre $\Phi_B^{-1}(P)$ over each point $P\in B_{ce}$ has a unique singularity at $((0:1:b(P)^{1/2}),P)$, which is unibranch of multiplicity two and singularity degree two. 
If the base is further restricted to the dense open subset 
\[\breve B_{ce}:= \{P\in B_{smooth}\mid c(P)\neq 0,\, e(P)\neq 0,\, d_{B|k}(P)\neq 0\}\] 
then the total space $\Phi_B^{-1}(\breve B_{ce})$ becomes smooth.
\end{prop}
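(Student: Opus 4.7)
My plan is to handle the two assertions separately, since the first is essentially a consequence of earlier work while the second requires a genuine Jacobian computation.

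For the first assertion, I would simply observe that for every $P \in B_{ce}$ the fibre $\Phi_B^{-1}(P)$ is isomorphic to the plane quartic $Q_{(a(P),b(P),c(P),e(P))}$ in the universal family $\pi\colon Q \to \AAA^4$. Over $\AAA^4_{ce}$ the discussion preceding the proposition already established that these fibres are integral plane quartic curves having $(0:1:b(P)^{1/2})$ as their unique singular point, unibranch of multiplicity two and singularity degree two. Since fibres are preserved by base change, the statement for $\Phi_B$ over $B_{ce}$ follows at once.

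For the second assertion, I would apply the Jacobian criterion to the hypersurface $T_B \subset \PP^2 \times B$ defined by
\[F = cx^4 + (z^2+by^2+abx^2)\bigl(e(z^2+by^2+abx^2)+y^2+xy+ax^2\bigr).\]
Setting $U = z^2+by^2+abx^2$ and $V = eU + y^2+xy+ax^2$, so that $F = cx^4 + UV$, and using that $P \in B_{smooth}$ makes $\PP^2 \times B$ smooth along the fibre, smoothness of $T_B$ at a point of the fibre reduces to non-vanishing of the differential $dF$. At the smooth points of each fibre one of the fibre-direction partials $F_x$, $F_y$, $F_z$ is non-zero by the Jacobian criterion applied to the quartic itself, so $dF \neq 0$ for free. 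The only delicate case is the singular point $((0:1:b(P)^{1/2}),P)$ of the fibre, where $F_x = F_y = F_z = 0$ by construction.

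The heart of the argument, and the step I expect to be the main technical obstacle, is the characteristic-two evaluation of the base partials $F_a, F_b, F_c, F_e$ at this singular point. Using that $U = 0$ and $V = 1$ there, together with the observation that $a$, $c$, and $e$ appear in $F$ only in combination with $x^2$, $x^4$, or $U$, a short computation gives $F_a = F_c = F_e = 0$ while $F_b = 1$; in characteristic two the asymmetric role of $b$ is what makes this pattern clean. Regarding $a,b,c,e$ as regular functions on $B$ and applying the chain rule, the differential $dF$ at the singular point then becomes the pullback of $db \in \Omega^1_{B|k,P}$. Consequently $T_B$ is smooth at this point if and only if $d_{B|k}(b)(P) \neq 0$, which is exactly the defining condition of $\breve B_{ce}$. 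Once this vanishing pattern of the base partials is verified, the Jacobian criterion assembles the rest of the argument immediately.
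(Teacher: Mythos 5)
Your proposal is correct and follows essentially the same route as the paper: the first assertion is reduced to the earlier fibre-by-fibre analysis of the universal family $\pi\colon Q\to\AAA^4$, and the second is settled by computing the base partials $(F_a,F_b,F_c,F_e)=(0,1,0,0)$ at the singular point (your evaluation via $U=0$, $V=1$ checks out) and invoking the chain rule to identify $dF$ there with the pullback of $db$. The only difference is that you spell out the intermediate steps that the paper leaves implicit.
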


\begin{proof} If $c(P)\neq 0$, then the point $((0:1:b(P)^{1/2}),P) \in \PP^2 \times B$ is a non-smooth point of the fibre $\Phi_B^{-1}(P)$, which is unibranch of multiplicity two and singularity degree two. If in addition $e(P)\neq 0$ then the remaining points of the fibre are smooth points of the fibre and in particular they are also smooth points of the total space.

Let $f(x,y,z,a,b,c,e) = c x^4 + \dots$ be the polynomial expression that defines the fibration $\Phi_B$.
As the partial derivatives $\frac{\pa f}{\pa a}$, $\frac{\pa f}{\pa b}$, $\frac{\pa f}{\pa c}$, $\frac{\pa f}{\pa e}$ at the point $((0,1,b(P)^{1/2}),P) \in \AAA^3\times B$ are equal to $0,1,0,0$, we conclude that the unique non-smooth point $((0:1:b(P)^{1/2}),P)$ of the fibre $\Phi_B^{-1}(P)$ is a smooth point of the total space if and only if $d_{B|k}(b)(P) \neq 0$.
\end{proof}

We have obtained the geometric meaning of the three conditions $b\notin K^2$, $c\neq0$ and $e\neq 0$ 
in Theorem~\ref{2025_01_19_23:30}~\ref{2025_01_19_23:32}: there exists a dense open subset of the base $B$, over which the total space $T_B$ is smooth and the fibres are integral curves of geometric genus one. 

We will compare the fibration by quartic curves $\Phi_B:T_B \to B$ with the elliptic fibration $\eps_{B}: E_B\to B$ where
\[ E_B :=\{((\bx:\by:\bz),P)\in\PP^2\times B\mid\by^2\bz+\bx\by\bz=\bx^3+a(P)\bx^2\bz+c(P)e(P)\bx\bz^2\}\]
and where $\eps_B$ is the second projection morphism. 
It is a dominant base extension of a closed subfibration of the elliptic fibration $\eps:E\to\AAA^4.$ 
The fibres of the open subfibration $\eps_{B_{ce}}:E_{B_{ce}} \to B_{ce}$ are the elliptic curves $E_{(\ba,\bc\bar e)}$ with the $j$-invariants $c(P)^{-2}e(P)^{-2}$, and the total space is smooth. 
By Proposition~\ref{2025_10_26_15:00}, the bijective purely inseparable degree-two morphism $T_{B_{c}}\to E_{B_{c}}$ 
over the base $B_{c}$ is given on the fibres by the assignment
\[ (\bx:\by:\bz) \mapsto \begin{cases}
(c(P)\bx^2:c(P)\bx\by:\bz^2{+}b(P)\by^2{+}a(P)b(P)\bx^2) 
 &\text{if } (\bx:\by:\bz) \neq (0:1:b(P)^{1/2}), \\
(0:1:0) &\text{if } (\bx:\by:\bz) = (0:1:b(P)^{1/2}).
\end{cases}
\]

We will represent the fibration $\Phi_{B_c}:T_{B_c}\to B_c$ as a quotient of an elliptic fibration, obtained by simultaneously resolving the singularities of the fibres. 
To get rid of the rational exponents in the description of the desingularisation morphisms (see Proposition~\ref{2025_03_01_20:00}) we will introduce the \emph{Frobenius pushforward} of an algebraic variety.

Viewing the algebraic variety $B$ as a $\Spec(k)$-scheme, we note that its relative Frobenius endomorphism $\varphi_k:B\to B$, which raises coordinates to their $p$-th powers, is only defined if $B$ is defined over the prime field $\F_p$.
In contrast, the absolute Frobenius endomorphism $\vf:B\to B$ is always defined, but in general it is not a $\Spec(k)$-morphism. 
However, replacing the target space $B$ by its Frobenius pullback $B^{(p)}$ one obtains a $\Spec(k)$-morphism $B \to B^{(p)}$. 
Instead, we replace the source.
We define
the \emph{Frobenius pushforward} $B'$ of $B$ by modifying the structure morphism $B\to \Spec(k)$ to $B \to \Spec(k) \overset \vf\to \Spec(k)$, and then the endomorphism $B \overset \vf\to B$ gives rise to a $\Spec(k)$-morphism $B' \to B$ we also denote by $\vf$.
Note that the local sections of the structure sheaf of the Frobenius pushforward $B'$ (resp., of the Frobenius pullback $B^{(p)}$) are the $p$-th roots (resp., the $p$-th powers) of the local sections of the structure sheaf of $B$. 
In this sense, the Frobenius pushforward $B'$ can be viewed as the $(-1)$-th Frobenius pullback of the $\Spec(k)$-scheme $B$.

If $B$ is an affine variety, say a closed subvariety of $\AAA^n$, then its Frobenius pushforward can be realized by the affine variety
\[ B' := \{ (x_1,\dots,x_n) \in \AAA^n(k) \mid h(x_1^p,\dots,x_n^p) = 0 \text{ for each $h\in I(B)$} \} \]
where $I(B)$ denotes the prime ideal of the polynomials that vanish on $B$, and where the $k$-morphism $B'\overset{\varphi}\to B$ is given by the Frobenius map $(x_1,\dots,x_n) \mapsto (x_1^p,\dots,x_n^p)$.
More generally, a similar observation applies if $B$ is a quasi-projective variety, say a locally closed subset of $\PP^n$.

In the setting of this section, where the variety $B$ is equipped with the four regular functions $a, b, c \text{ and } e$, we equip the Frobenius pushforward $B'$ with the four regular functions $a'=a\circ\varphi,\ b'=b\circ\varphi,\ c'=c\circ\varphi\ \text{ and }\ e'=e\circ\varphi$ where $\varphi:B'\to B$ is the Frobenius pushforward. 
Then the fibration $\eps_{B'}:E_{B'}\to B'$ is defined and its total space is equal to 
\[E_{B'}=\{((\bx':\by':\bz'),P')\in \PP^2 \times B'\mid
\by'^2\bz'+\bx'\by'\bz'=\bx^3+a'(P')\bx'^2\bz'+c'(P')e'(P')\bx'\bz'^2\}.\] 
By restricting the bases 
and summarizing results of this section we obtain: 
\begin{thm}
The fibration $\Phi_{B_c}:T_{B_c}\to B_c$ by quartic curves admits a cover by the elliptic fibration
$
\eps_{B'_{c'}}: 
E_{B'_{c'}} \tto B'_{c'}\,.
$
This cover is purely inseparable of exponent one on the bases but birational on the fibres. More precisely, there is a commutative diagram
\[
\begin{tikzcd}
  E_{B'_{c'}} \ar[r] \ar[d,"\eps"] & T_{B_c} \ar[d,"\Phi"] \ar[r] & E_{B_c} \ar[d,"\eps"] \\
   B'_{c'} \ar[r,"\varphi"] & B_c \ar[r,"id"] & B_c
\end{tikzcd}
\]
where the morphism on the left-hand side is given
on the bases by the bijective purely inseparable Frobenius morphism $\varphi$ and
on the fibers by the bijective birational desingularization morphisms of Proposition~\ref{2025_03_01_20:00}. 
It assigns to a point $P'$ of the base $B'_{c'}$ the point $P:=\vf(P')\in B_c\,$, and assigns to each point $((\bx':\by':\bz'),P')$ of the fibre over $P'$ the point 
\[\begin{cases}
 ((\bx'^2:\by'^2:b(P)\by'^2+a(P)b(P)\bx'^2+c(P)\bx'\bar z'),P) & \text{if $(\bx':\by':\bz')\neq (0:0:1)$}, 
 \\((0:e(P):1+b(P)e(P)),P) & \text{if $(\bx':\by':\bz')=(0:0:1)$}.
\end{cases}\] 
In contrast, the morphism on the right-hand side leaves the points of the base fixed and is given on the fibres by the bijective purely inseparable degree-two morphism of Proposition~\ref{2025_10_26_15:00} as described before.
The composite horizontal morphisms are Frobenius morphisms that raise coordinates to their second powers.
\end{thm}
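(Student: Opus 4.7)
The plan is to assemble the two vertical morphisms by globalizing the fibrewise constructions of Propositions~\ref{2025_10_26_15:00} and~\ref{2025_03_01_20:00}, and then to verify the commutativity fibrewise.

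First I would construct the right-hand morphism $T_{B_c}\to E_{B_c}$. The morphism $Q\to E$ of Proposition~\ref{2025_10_26_15:00} is a morphism of $\AAA^4$-schemes given by explicit polynomial formulas in the projective coordinates $(\bar x{:}\bar y{:}\bar z)$ and the coefficient variables $(\bar a,\bar b,\bar c,\bar e)$. Since $\Phi_B$ is a dominant base extension of a closed subfibration of $\pi:Q\to\AAA^4$ along $(a,b,c,e):B\to\AAA^4$, and the elliptic fibration $\eps_B$ is analogously a base extension of $\eps:E\to\AAA^4$, pulling back $Q\to E$ along this base change produces a morphism $T_{B_c}\to E_{B_c}$ over the identity on $B_c$; its restriction to each fibre is the bijective purely inseparable degree-two morphism of Proposition~\ref{2025_10_26_15:00}.

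Next I would construct the left-hand morphism $E_{B'_{c'}}\to T_{B_c}$ covering $\varphi:B'_{c'}\to B_c$. The very point of passing to the Frobenius pushforward is that the symbols $\bar a^{1/2}$, $\bar b^{1/2}$, $\bar c^{1/2}$, $\bar e^{1/2}$ appearing in the fibrewise formulas of Proposition~\ref{2025_03_01_20:00} become available as regular functions on $B'_{c'}$, since local sections of $\OO_{B'}$ are $p$-th roots of local sections of $\OO_B$ along $\varphi$. The formulas of Proposition~\ref{2025_03_01_20:00} therefore assemble into the explicit assignment displayed in the statement of the theorem, defined globally on the total space $E_{B'_{c'}}\subset\PP^2\times B'_{c'}$. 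Verifying that this assignment is an honest morphism (and not merely a rational map) reduces to the regularity check at the locus lying over $(0{:}0{:}1)$ already carried out in Proposition~\ref{2025_03_01_20:00}, and the two branches of the formula agree on their common locus for the same reason.

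Finally, the commutativity of the two squares reduces to fibrewise checks. The right-hand square commutes trivially since both base maps are the identity, and its fibrewise content is Proposition~\ref{2025_10_26_15:00}. For the left-hand square, over each $P'\in B'_{c'}$ with image $P:=\varphi(P')\in B_c$, the fibrewise composition $E_{(\bar a^{1/2},\bar\eta^{1/2})}\to Q_{(\bar a,\bar b,\bar c,\bar e)}\to E_{(\bar a,\bar\eta)}$ of the two vertical maps is precisely the relative Frobenius, as asserted in the final sentence of Proposition~\ref{2025_03_01_20:00}; this identifies the upper horizontal composite with the global Frobenius morphism that raises coordinates to their squares, and the lower horizontal composite is $\varphi$ by construction. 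The cover $E_{B'_{c'}}\to T_{B_c}$ is then purely inseparable of exponent one on the bases because $\varphi$ is the relative Frobenius, and birational on fibres by Corollary~\ref{2025_05_11_00:45}. The main obstacle, and indeed the entire purpose of introducing the Frobenius pushforward in this section, is the bookkeeping around the $p$-th root coefficients: once one is convinced that the fibrewise assignment of Proposition~\ref{2025_03_01_20:00} globalizes correctly as a scheme-theoretic morphism over $\varphi$, the remaining claims of the theorem are immediate consequences of the propositions already established.
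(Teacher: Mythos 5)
Your proposal is correct and follows essentially the same route as the paper, which in fact states this theorem as a summary of the section's results ("By restricting the bases and summarizing results of this section we obtain") without a separate proof: the two vertical morphisms are the globalizations of Propositions~\ref{2025_10_26_15:00} and~\ref{2025_03_01_20:00}, the Frobenius pushforward is introduced precisely so that the square-root coefficients become regular functions on $B'$, and commutativity together with the identification of the horizontal composites as Frobenius morphisms is checked fibrewise. Nothing further is needed.
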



\section{A pencil of geometrically elliptic quartic curves \\ in characteristic two}\label{2025_03_13_21:30}

In this section we take an in-depth look at the pencil of quartics obtained from the closed subfibration of
$\pi: Q \to \AAA^4$ over the hyperbola $\{ a=0, \, c=1, \, be = 1 \} \su \AAA^4_{(a,b,c,e)}$.
Let $k$ be an algebraically closed ground field of characteristic two.
Let $S$ be the projective surface in $\PP^2 \times \PP^1$ defined by the bihomogeneous equation
\[ s t x^4 + (s z^2 + t y^2) (s z^2 + t x y) = 0  \]
of degrees 4 and 2 in the projective coordinates $x,y,z$ and $s,t$ of $\PP^2$ and $\PP^1$.
By the Jacobian criterion, the surface $S$ has precisely two singular points
\[ M:= ((0:1:0),(1:0)), \quad N:= ((0:0:1),(0:1)), \]
which are normal singularities by Serre's conditions \cite[23.8]{Mat89}.
The projection morphism
\[ \phi:S\tto \PP^1\]
yields a flat fibration by plane projective quartic curves.
The fibres over the points that are different from $(1:0)$ and $(0:1)$ have been described in Section~\ref{2025_10_01_15:20};
indeed, in the notation of that section the fibre over a point $(1:b)$ with $b\neq0$ is the quartic curve $Q_{(0,b,1,b^{-1})}$.
The fibre over the point $(1:0)$ is a quadruple line whose support is equal to
\[ Z:= \{z=0\} \times \{ (1:0) \} \su S. \]
Over the point $(0:1)$ the fibre is a reducible non-reduced curve consisting of two lines of multiplicities one and three, with supports
\[ X:= \{x=0\} \times \{ (0:1) \}, \quad Y:= \{y=0\} \times \{ (0:1) \}. \]

By intersecting the surface $S$ with the hyperplane $\{x=0\} \su \PP^2 \times \PP^1$ we obtain two horizontal prime divisors on $S$, namely the smooth rational curves
\[ \pp:=\{x=0, \, s z^2 + t y^2 = 0\}, \quad \fq:= \{(0:1:0)\} \times \PP^1. \]
The horizontal prime divisor $\pp$ is purely inseparable of degree 2 over the base $\PP^1$,
and it cuts each fibre $S_{(1:b)}$ (where $b\neq0$) at its only singular point $((0 : 1 : b^{1/2}),(1:b))$.
The horizontal divisor $\fq$ has degree 1 over the base $\PP^1$, and, as a prime of the function field $F|K=k(S)|k(\PP^1)$ of the fibration, it is the rational prime mentioned in Remark~\ref{2025_05_10_23:55}.
It cuts each fibre $S_{(1:b)}$ (where $b\neq0$) at its only point $((0:1:0),(1:b))$ of order two in the abelian group $S_{(1:b)}$; see Proposition~\ref{2025_10_13_16:15}.

The two horizontal divisors $\pp$ and $\fq$ meet at the singular point $M \in S$, which lies in the fibre $Z$ over $(1:0)$.
The components $X$ and $Y$ of the fibre over $(0:1)$ intersect at the singular point $N \in S$, which belongs to the horizontal divisor $\pp$.

We will determine a desingularization of $S$ through a sequence of blowups, and after blowing down the fibre components that are contractible we will arrive at a minimal regular model of the fibration $\phi$,
which is uniquely determined by the function field $F|K=k(S)|k(\PP^1)$ according to a theorem of Lichtenbaum and Shafarevich (see \cite[Theorem~4.4]{Lic68}, \cite[p.\,155]{Sha66}, \cite[p.\,422]{Liu02}).

Let us describe the desingularization process explicitly. By blowing up the singular point $M$ we obtain an exceptional fibre consisting of two curves $A_1$ and $A_2$ that intersect at a unique point, which is the only singular point of the blowup surface. Blowing up this point produces an exceptional curve $\hat A_3$ consisting entirely of singular points. We blow up along the curve $\hat A_3$, which in turn gives us two exceptional curves $A_3$ and $A_4$, each covering the curve $\hat A_3$ isomorphically. 
This resolves the singular point $M$.

As for the point $N$, after blowing it up we get an exceptional curve $\hat B_1$ that contains only singular points. Blowing up along this curve produces two exceptional curves $B_1$ and $B_2$ that cover $\hat B_1$ isomorphically. The two exceptional curves intersect at two points $N_1$ and $N_2$, which are the only singular points on the blowup surface. These are rational double points of type $\boldsymbol{A_5}$ and $\boldsymbol{A_1}$, which are resolved by $3+1=4$ blowups, giving two bunches of exceptional curves $\{B_3,B_4,B_5,B_6,B_7\}$ and $\{B_8\}$ in the desingularized surface $\ov S$.


Therefore, we get in total four and eight exceptional curves lying over the singular points $M$ and $N$ respectively. 
By abuse of language,
let us denote the strict transforms of the curves $X$, $Y$, $Z$, $A_i$, $B_j$, $\pp$, $\fq$ by the same symbols.
The blowup computations show that the fibres of the composition morphism
\[ \bar f:\ov S \tto S \overset\phi\tto \PP^1 \]
over the points $(1:0)$ and $(0:1)$ are equal to
\begin{align*}
    \bar f^*(1:0) &= A_2 + 3 A_4 + 4 A_3 + 4 Z + 2 A_1, \\
    \bar f^*(0:1) &= X + 2 B_1 + 2 B_8 + 2 B_2 + 4 B_3 + 6 B_5 + 3 Y + 5 B_7 + 4 B_6 + 3 B_4.
\end{align*}
It also follows that the components of these fibres intersect as in Figure~\ref{2025_03_13_13:30}, where intersection numbers are equal to $1$ in all cases except $A_3 \cdot A_4=2$.
We see in particular that the arithmetic genus of the reducible reduced curves $A_1 \cup A_2 \cup A_3 \cup A_4$ and $B_1 \cup B_2 \cup \dots \cup B_8$ over the singular points $M$ and $N$ is equal to 1;
indeed, the arithmetic genus $p_a(C)$ of a connected reduced curve $C=\sum_i C_i$  on a smooth algebraic surface with canonical divisor $K$ is given by the Adjunction Formula 
(see e.g., \cite[4.1]{Lied13})
\[   2p_a(C) - 2 = C \cdot (C+K) = \sum_{i \neq j}C_i \cdot C_j + \sum_{i}C_i \cdot (C_i+K) = 2\sum_{i<j}C_i \cdot C_j + \sum_{i}(2 p_a (C_i) - 2). \]
The eight exceptional lines over $N$ form a cycle, which is one of the configurations associated to an elliptic singularity \cite[Proposition~7.6.9]{Ish18}.


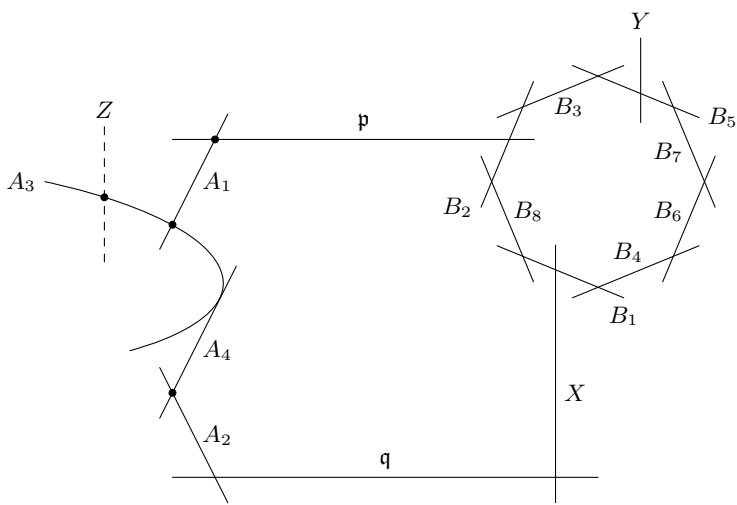
\begin{figure}[h]

\centering
\begin{tikzpicture}[line cap=round,line join=round,x=0.7cm,y=0.7cm, scale=0.8]

\begin{scriptsize}

\draw (0 -0.25*2.414213,-2.5 -0.25) -- (2.5/1.414213 +0.25*2.414213,-2.5/1.414213 +0.25);  \draw (.7,-2.1) node[above] {$B_4$};
\draw (2.5/1.414213 -0.25,-2.5/1.414213 -0.25*2.414213) -- (2.5 +0.25,0 +0.25*2.414213); \draw (2.2,-.7) node[left] {$B_6$};
\draw (2.5 +0.25,0 -0.25*2.414213) -- (2.5/1.414213 -0.25,2.5/1.414213 +0.25*2.414213); \draw (2.2,.7) node[left] {$B_7$};
\draw (2.5/1.414213 +0.25*2.414213,2.5/1.414213 -0.25) -- (0 -0.25*2.414213,2.5 +0.25); \draw (2.5/1.414213 +0.25*2.414213,2.5/1.414213 -0.25) node[right] {$B_5$};

\draw (0 +0.25*2.414213,2.5 +0.25) -- (-2.5/1.414213 -0.25*2.414213,2.5/1.414213 -0.25); \draw (-.7,2.2) node[below] {$B_3$};
\draw (-2.5/1.414213 +0.25,2.5/1.414213 +0.25*2.414213) -- (-2.5 -0.25,0 -0.25*2.414213); \draw (-2.5 -0.25,0 -0.25*2.414213) node[left] {$B_2$};
\draw (-2.5 -0.25,0 +0.25*2.414213) -- (-2.5/1.414213 +0.25,-2.5/1.414213 -0.25*2.414213); \draw (-2.2,-.7) node[right] {$B_8$};
\draw (-2.5/1.414213 -0.25*2.414213,-2.5/1.414213 +0.25) -- (0 +0.25*2.414213,-2.5 -0.25); \draw (0 +0.25*2.414213,-2.5 -0.25) node[below] {$B_1$};

\draw (-1.5,1) -- (-8 -1 -1,1); \draw (-5.5,1) node[above]{$\pp$};
\draw (-8 +.3 -1,1 +.6) -- (-9 -.3 -1,-1 -.6); \draw (-8.5 -1,0) node[right] {$A_1$};

\draw plot [smooth,tension=1.2] coordinates {(-9 -3 -1,1 -1) (-5 -3 +.09 -1,-1 -1) (-7 -3 -1,-3 -1)}; \draw (-9 -3 -1,1 -1) node[left] {$A_3$};

\draw (-8 +.5 -1,-3 +1) -- (-9 -.3 -1,-5 -.6); \draw (-8.5 -1,-4) node[right] {$A_4$};
\draw (-9 -.3 -1,-5 +.6) -- (-8 +.3 -1,-7 -.6); \draw (-8.5 -1,-6) node[right] {$A_2$};

\draw (-8 -1 -1,-7) -- (-1 +1,-7); \draw (-5,-7) node[above]{$\fq$};

\draw (-1,-1.5) -- (-1,-7 -.6); \draw (-1,-5) node[right] {$X$};

\draw (1,3.4) -- (1,1.4); \draw (1,3.4) node[above]{$Y$};
\draw[dashed] (-11.6,1 +.3) -- (-11.6,-1.7 -.3); \draw (-11.6,1 +.3) node[above]{$Z$};

\draw[fill=black] (-11.6,-.37) circle (1.6 pt);
\draw[fill=black] (-10,-1.02) circle (1.6 pt);
\draw[fill=black] (-8 -1,1) circle (1.6 pt);
\draw[fill=black] (-9 -1,-5) circle (1.6 pt);

\end{scriptsize}
\end{tikzpicture}
\caption{Configuration of curves on $\wt S$}\label{2025_03_13_13:30}
\end{figure}

Using the fact that fibres meet their components with intersection number zero we can compute the self-intersection numbers of all components. For $\bar f^*(1:0)$ we have
\[ A_1\cdot A_1 = -2, \quad A_2 \cdot A_2 = A_3\cdot A_3 = A_4 \cdot A_4 = -3, \quad Z \cdot Z = -1, \]
and for $\bar f^*(0:1)$ we get
\[ B_1\cdot B_1 = B_2\cdot B_2 = -3, \quad B_3 \cdot B_3 = \dots = B_8 \cdot B_8 = Y \cdot Y = X \cdot X = -2. \]
It follows that the fibration $\bar f$ is not minimal, in the sense that the fibre component 
$Z$ is a $(-1)$-curve, i.e., the rational curve $Z$ is contractible according to Castelnuovo's contractibility criterion.
Thus we can contract the $(-1)$-curve $Z$ to a smooth point,
and in turn get a smooth surface $\wt S$, a blowdown morphism $\ov S \to \wt S$, and a new fibration
\[ f: \wt S \tto \PP^1. \]
Note that the smooth surface $\wt S$ does not dominate $S$, i.e., the birational map $\wt S \ttto S$ is not a morphism, because no curve in $\wt S$ covers the curve $Z \su S$.
Note also that the horizontal curve $\pp \su \wt S$ is the moving singularity of the fibration $f$.

Let us summarize our discussion in the following theorem.

\begin{thm}
The fibration $f:\wt S \to \PP^1$ is the minimal regular model of the fibration $\phi:S\to \PP^1$, i.e., no fibre component of $f$ is a $(-1)$-curve. 
The fibres of $f$ over the points different from $(1:0)$ and $(0:1)$ coincide with those of $\phi$. The fibres over the points $(1:0)$ and $(0:1)$ are the Weil divisors
\begin{align*}
    f^*(1:0) &= A_2 + 3 A_4 + 4 A_3 + 2 A_1 \\
    f^*(0:1) &= X + 2 B_1 + 2 B_8 + 2 B_2 + 4 B_3 + 6 B_5 + 3 Y + 5 B_7 + 4 B_6 + 3 B_4,
\end{align*}
whose components intersect as in Figure~\ref{2025_03_13_13:30}, where intersection numbers are equal to $1$ in all cases except $A_3 \cdot A_4 = 2$.
These components have self-intersection numbers $X \cdot X = Y \cdot Y = -2$, 
$A_i \cdot A_i = -2$ if $i$ is odd, $A_i \cdot A_i = -3$ if $i$ is even, 
$B_j \cdot B_j = -3$ if $j \leq 2$, and $B_j \cdot B_j = -2$ if $j>2$.
\end{thm}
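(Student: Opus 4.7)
The plan is that nearly all of the geometric work has been completed in the preceding discussion, so the proof is mainly a matter of transcribing the data from $\ov S$ to $\wt S$, reorganizing it, and drawing the final conclusion about minimality. I would structure the argument in four logical steps.

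First, I would handle the ``generic'' fibres over points $(s:t)\ne (1{:}0),(0{:}1)$. Since the only singular points of $S$ are $M$ and $N$, which lie over $(1{:}0)$ and $(0{:}1)$ respectively, the desingularization morphism $\ov S\to S$ is an isomorphism over a neighbourhood of any other point of $\PP^1$. Because the exceptional curve $Z$ that gets contracted under $\ov S\to \wt S$ lies in $\bar f^*(1:0)$, the birational map $\wt S\dashrightarrow S$ is also an isomorphism above this open set. This immediately identifies the remaining fibres of $f$ with those of $\phi$.

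Second, for the special fibres, I would start from the explicit total transforms
\[\bar f^*(1:0) = A_2+3A_4+4A_3+4Z+2A_1, \qquad \bar f^*(0:1)=X+2B_1+2B_8+2B_2+4B_3+6B_5+3Y+5B_7+4B_6+3B_4\]
obtained from the blowup sequence, and observe that the contraction of $Z$ simply drops the term $4Z$ from the first divisor and leaves the second untouched. All pairwise intersection numbers between components distinct from $Z$ are preserved under the blowdown, so the intersection pattern of Figure~\ref{2025_03_13_13:30} (restricted to the vertices other than $Z$) transfers verbatim to $\wt S$.

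Third, to obtain the self-intersection numbers on $\wt S$, I would use the identity $f^*(p)\cdot C=0$ for each component $C$ of $f^*(p)$. For instance, $f^*(1:0)\cdot A_1=0$ together with $A_1\cdot A_3=1$ and $A_1$ disjoint from $A_2,A_4$ yields $4+2A_1^2=0$, so $A_1^2=-2$; proceeding similarly for $A_2,A_3,A_4$ and for each $B_j,X,Y$ produces a triangular linear system whose solution is precisely the list of self-intersection numbers in the statement. As a cross-check, one can verify the same numbers via the blowdown formula $(C')^2=C^2+(C\cdot Z)^2$ for components meeting $Z$: on $\ov S$ one has $A_3^2=-3$ and $A_3\cdot Z=1$, which correctly gives $A_3^2=-2$ on $\wt S$.

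Finally, for minimality, I would inspect the resulting list of self-intersections: every fibre component of $f$ satisfies $C^2\in\{-2,-3\}$, so no fibre component is a smooth rational $(-1)$-curve, and Castelnuovo's contractibility criterion confirms that no further contraction is possible. By the Lichtenbaum--Shafarevich theorem cited above, this characterizes $f$ as the (unique) minimal regular model of the function field $k(S)|k(\PP^1)$. The only place requiring some care is step three, where the bookkeeping for the long cycle of $B_j$'s over $N$ must be organized so that the linear system is solved in the correct order (starting from the components at the ends of the chain and propagating inward); this is the main, purely combinatorial, obstacle.
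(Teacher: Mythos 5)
Your proposal is correct and follows essentially the same route as the paper: it takes the blowup data for $\bar f$ as given, pushes the special fibres forward under the contraction of the $(-1)$-curve $Z$, recovers the self-intersection numbers from the relation $f^*(p)\cdot C=0$ exactly as the paper does, and concludes minimality from the absence of $(-1)$-components. The only addition is your cross-check via the blowdown formula $(C')^2=C^2+(C\cdot Z)^2$, which is consistent with, but not needed for, the paper's argument.
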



By construction, it is clear from Section~\ref{2025_10_01_15:20} that the fibration $f: \wt S \to \PP^1$ 
is an inseparable double cover of an elliptic fibration over $\PP^1$, defined as a closed subfibration of
$\eps: E \to \AAA^4$ over the hyperbola $\{ a=0, \, c=1, \, be = 1 \} \su \AAA^4_{(a,b,c,e)}$.
Let us give an explicit description.
Consider the projective surface $S'\su \PP^2 \times \PP^1$ cut out by
\begin{equation}\label{2025_03_13_15:55}
    s x z^2 + t (y^2 z + xyz + x^3) = 0,
\end{equation}
and the projection morphism
\[ \phi':S' \tto \PP^1, \]
which is a fibration by plane cubic curves. The fibre over a point $(1:b)$ with $b\neq0$ is an elliptic curve with $j$-invariant $j=b^{2}$, while 
the fibre over the point $(1:0)$ is a non-reduced reducible curve, with two components $X'$ and $Z'$ of multiplicities one and two that meet at the point
\[ M':= ((0:1:0),(1:0)). \]
The fibre over the point $(0:1)$ is a rational cubic curve $Y'$, with a nodal singularity at
\[ N':= ((0:0:1),(0:1)). \]
There are two horizontal prime divisors of degree 1 over the base $\PP^1$ that pass through the points $M'$ and $N'$, namely $\pp':= \{(0:1:0)\} \times \PP^1$ and $\fq':=\{(0:0:1)\} \times \PP^1$.


By the Jacobian criterion, the points $M'$ and $N'$ are the only singular points on the surface $S'$. They are Du Val singularities of type $\boldsymbol{A_6}$ and $\boldsymbol{A_1}$, which are resolved by $3+1=4$ blowups. 
Therefore, in the desingularized surface $\wt S'$ we get two bunches of exceptional curves $\{A_1',A_3',A_5',A_6',A_4',A_2' \}$ and $\{B_1' \}$, which intersect as in Figure~\ref{2025_03_13_15:15}, where we have denoted the strict transforms of the curves $X',Y',Z'$ and $\pp',\fq'$ with the same symbols.
Furthermore, the fibres $f'^*(1:0)$ and $f'^*(0:1)$ of the composition morphism
\[ f' : \wt S' \tto S' \overset {\phi'}\tto \PP^1 \]
are of type $\boldsymbol{\tilde E_7}$ and $\boldsymbol{\tilde A_1}$ respectively, or more precisely
\[ f'^*(1:0) = X' + 2A_2' + 3 A_4' + 4 A_6' + 2 Z' + 3 A_5' + 2 A_3' + A_1', \quad f'^*(0:1) = Y' + B_1'. \]
Their components have self-intersection numbers
$A_i'\cdot A_i' = B_1' \cdot B_1' = X' \cdot X' = Y' \cdot Y' = Z' \cdot Z' = -2$,
so in particular the elliptic fibration $f': \wt S' \to \PP^1$ is the minimal regular model of the fibration $\phi':S'\to \PP^1$.

\begin{figure}[h]

\centering
\begin{tikzpicture}[line cap=round,line join=round,x=0.7cm,y=0.7cm,scale=0.7]

\begin{scriptsize}

\draw plot [smooth,tension=1.3] coordinates {(-2,0 +1) (-.5,-5.5) (2,-3)}; \draw (2,-3) node[above] {$Y'$};
\draw plot [smooth,tension=1.3] coordinates {(-2,-10 -1) (-.5,-4.5) (2,-7)}; \draw (2,-7) node[below] {$B_1'$};

\draw (-2 +1,0) -- (-10 -1,0); \draw (-6,0) node[above] {$\pp'$};
\draw (-10  -0.40,0  + 0.40*2*0.70) -- (-10 +1.15  +0.30,0 -2*0.70*1.15  -0.30*2*0.70); \draw (-10  -0.40,0  + 0.40*2*0.70) node[above] {$A_1'$};
\draw (-10 +1.15  +0.30,0 -2*0.70*1.15  +0.30*2*0.70) -- (-10  -0.30,0 -4*0.70*1.15  -0.30*2*0.70); \draw (-10  +1.15/2 +.1,0 -3*0.70*1.15 +.3) node[left] {$A_3'$};
\draw (-10  -0.30,0 -4*0.70*1.15  +0.30*2*0.70) -- (-10 +1.15  +0.30,0 -6*0.70*1.15  -0.30*2*0.70); \draw (-10 +1.15/2 -.1,0 -5*0.70*1.15 +.2) node[right] {$A_5'$};
\draw (-10 +1.15  +0.40,0 -6*0.70*1.15  +0.40*2*0.29) -- (-10  -2.50,0 -6*0.70*1.15 -2*0.29*1.15  -2.50*2*0.29); \draw (-10  -2.50,0 -6*0.70*1.15 -2*0.29*1.15  -2.50*2*0.29) node[left] {$A_6'$};
\draw (-10  -0.30,0 -6*0.70*1.15 -2*0.29*1.15  +0.30*2*0.70) -- (-10 +1.15  +0.30,0 -6*0.70*1.15 -2*0.29*1.15  -2*0.70*1.15  -0.30*2*0.70); \draw (-10 +1.15/2 +.1,0 -7*0.70*1.15 -2*0.29*1.15 -.2) node[left] {$A_4'$};
\draw (-10 +1.15  +0.30,0 -6*0.70*1.15 -2*0.29*1.15  -2*0.70*1.15  +0.30*2*0.70) -- (-10  -0.30,0 -6*0.70*1.15 -2*0.29*1.15  -4*0.70*1.15  -0.30*2*0.70); \draw (-10 +1.15/2 -.2,0 -7*0.70*1.15 -2*0.29*1.15  -2*0.70*1.15 -.2) node[right] {$A_2'$};
\draw (-10,0 -6*0.70*1.15 -2*0.29*1.15  -4*0.70*1.15 +0.6) -- (-10,0 -6*0.70*1.15 -2*0.29*1.15  -6*0.70*1.15 -0.6); \draw (-10,0 -6*0.70*1.15 -2*0.29*1.15  -6*0.70*1.15 -0.6) node[below] {$X'$};

\draw (-10 -1,0 -6*0.70*1.10 -2*0.29*1.10  -6.5*0.70*1.10 ) -- (-2 +1,0 -6*0.70*1.10 -2*0.29*1.10  -6.5*0.70*1.10 ); \draw (-6,0 -6*0.70*1.10 -2*0.29*1.10  -6.5*0.70*1.10 ) node[above] {$\fq'$};

\draw (-10  -1.50,0 -6*0.70*1.15 -2*0.29*1.15  -1.50*2*0.29 -1) -- (-10  -1.50,0 -6*0.70*1.15 -2*0.29*1.15  -1.50*2*0.29 +1);  \draw (-10  -1.50,0 -6*0.70*1.15 -2*0.29*1.15  -1.50*2*0.29 +1) node[above]{$Z'$};

\end{scriptsize}
\end{tikzpicture}
\caption{Configuration of curves on $\wt S'$}\label{2025_03_13_15:15}
\end{figure}
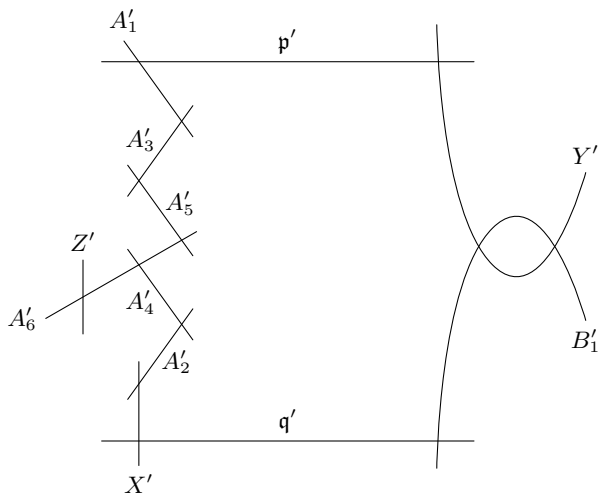

As follows from Section~\ref{2025_10_01_15:20} (see Proposition~\ref{2025_10_26_15:00}), the desired inseparable double cover of the elliptic fibration $f' : \wt S' \to \PP^1$ by our fibration $f : \wt S \to \PP^1$ is given by the rational map
\[ \wt S \ttto \wt S' \]
that is induced by the rational map 
\[ S \ttto S',  \quad ((x:y:z),(s:t)) \mapsto ((s x^2 : s x y : s z^2 + t y^2),(s:t)). \]
Note that $S \ttto S'$ is defined everywhere except at the two singular points $M,N \in S$.

As described in Section~\ref{2025_10_01_15:20}, over each point $(1:b)\neq (1:0)$ the fibre in $\wt S$ is mapped onto the fibre in $\wt S'$ by $\wt S \ttto \wt S'$ according to the rule $(x:y:z) \mapsto (x^2 : xy : z^2 + b y^2)$. 
Over the points $(1:0)$ and $(0:1)$ the following holds: 
the curves 
$A_1$, $A_4$, $A_5$, $A_2$, $B_1$, $B_2$ 
are taken isomorphically to the curves 
$A_3'$, $A_6'$, $A_4'$, $X'$, $B_1'$, $Y'$, respectively, 
and the three bunches of curves $\{B_8\}$, $\{ B_3,\dots,B_7,Y\}$ and $\{X\}$ are contracted to the three intersection points between the curves $B_1'$, $Y'$ and $\pp'$.
The horizontal divisor $\fq$ is mapped isomorphically to the horizontal divisor $\fq'$, and the moving singularity $\pp$ is mapped to the horizontal divisor $\pp'$ via an inseparable map of degree $2$.

We note that the curves $A_1'$, $A_5'$, $A_2'$, $Z'$ in Figure~\ref{2025_03_13_15:15} have not been covered by any curve in Figure~\ref{2025_03_13_13:30}.
In fact, the indeterminacy locus of the rational map $\wt S \ttto \wt S'$ consists of the four marked points in Figure~\ref{2025_03_13_13:30},
which, upon resolution, give rise to four exceptional curves that cover isomorphically the curves $A_1'$, $A_5'$, $A_2'$, $Z'$.
One of these exceptional curves is the curve $Z \su \ov S$ that was contracted by the map $\ov S \to \wt S$. 

Let us note finally that both surfaces $\wt S$ and $\wt S'$ are rational.
For $\wt S'$ this is clear because $\wt S' \to \PP^1$ is a cubic pencil (see \cite[Section~7.5]{SchSh19}). Precisely, the pencil is defined by equation~\eqref{2025_03_13_15:55}, and the surface $\wt S'$ is constructed from $\PP^2$ after resolving the two base points $(0:0:1)$ and $(0:1:0)$ of the pencil. 
Over $(0:0:1)$ we get the $(-2)$-curve $B_1'$ together with the section $\fq'$, which is a $(-1)$-curve. 
Over $(0:1:0)$ we get the $(-2)$-curves $A_1'$, $A_3'$, $A_5'$, $A_6'$, $A_4'$, $A_2'$ together with the setion $\pp'$, which is also a $(-1)$-curve.
The rational curves $X'$, $Y'$, $Z'$ are the strict transforms of the plane curves $\{x=0\}$, $\{y^2 z + xyz + x^3=0\}$, $\{ z=0 \}$ in $\PP^2$ respectively.

The rationality of $\wt S$ follows from the blow up computations along the curve $\hat A_3$. More precisely, there is a birational map $S \ttto \PP^2$ given by the assignment
\[ ((x:y:z),(s:t)) \mapsto (t x^2 y^3 : s x^5 : t y^4 z). \]
This rational map is undefined only at the two singular  points $M,N\in S$.



\begin{bibdiv}
\begin{biblist}
\bib{BedSt87}{article}{
  author={Bedoya, Hernando},
  author={St\"ohr, Karl-Otto},
  title={An algorithm to calculate discrete invariants of singular primes in function fields},
  journal={J. Number Theory},
  volume={27},
  date={1987},
  number={3},
  pages={310--323},
}

\bib{BM76}{article}{
  author={Bombieri, Enrico},
  author={Mumford, David},
  title={Enriques' classification of surfaces in char. $p$. III},
  journal={Invent. Math.},
  volume={35},
  date={1976},
  pages={197--232},
}

\bib{BMSa25}{article}{
  author={Borelli, Giuseppe},
  author={Dorado Moreira, Camilo David},
  author={Salom\~ao, Rodrigo},
  title={Non-smooth regular curves via a descent approach},
  date={2025},
  note={Preprint at \href {https://arxiv.org/abs/2501.17353}{\textsf {arXiv:2501.17353}}},
}

\bib{EGA}{article}{
  author={Grothendieck, Alexander},
  title={\'El\'ements de g\'eom\'etrie alg\'ebrique (r\'edig\'es avec la collaboration de Jean Dieudonn\'e)},
  journal={Inst. Hautes Études Sci. Publ. Math.},
  date={{1960--1967}},
  number={4, 8, 11, 17, 20, 24, 28, 32},
  label={EGA},
}

\bib{Har77}{book}{
  author={Hartshorne, Robin},
  title={Algebraic geometry},
  series={Graduate Texts in Mathematics},
  volume={52},
  publisher={Springer-Verlag, New York-Heidelberg},
  date={1977},
  pages={xvi+496 pp.},
}

\bib{HiSt22}{article}{
  author={Hilario, Cesar},
  author={St\"ohr, Karl-Otto},
  title={On regular but non-smooth integral curves},
  journal={J. Algebra},
  volume={661},
  date={2025},
  pages={278--300},
}

\bib{HiSt24}{article}{
  author={Hilario, Cesar},
  author={St\"ohr, Karl-Otto},
  title={Fibrations by plane projective rational quartic curves in characteristic two},
  date={2024},
  note={Preprint at \href {https://arxiv.org/abs/2409.05464}{\textsf {arXiv:2409.05464}}},
}

\bib{Ish18}{book}{
  author={Ishii, Shihoko},
  title={Introduction to singularities},
  edition={2},
  publisher={Springer, Tokyo},
  date={2018},
  pages={x+236},
}

\bib{Lic68}{article}{
  author={Lichtenbaum, Stephen},
  title={Curves over discrete valuation rings},
  journal={Amer. J. Math.},
  volume={90},
  date={1968},
  pages={380--405},
}

\bib{Lied13}{article}{
  author={Liedtke, Christian},
  title={Algebraic surfaces in positive characteristic},
  conference={ title={Birational geometry, rational curves, and arithmetic}, },
  book={ series={Simons Symp.}, publisher={Springer, Cham}, },
  date={2013},
  pages={229--292},
}

\bib{Liu02}{book}{
  author={Liu, Qing},
  title={Algebraic geometry and arithmetic curves},
  series={Oxford Graduate Texts in Mathematics},
  volume={6},
  publisher={Oxford University Press, Oxford},
  date={2002},
  pages={xvi+576 pp.},
}

\bib{Mat89}{book}{
  author={Matsumura, Hideyuki},
  title={Commutative ring theory},
  series={Cambridge Studies in Advanced Mathematics},
  volume={8},
  edition={2},
  note={Translated from the Japanese by M. Reid},
  publisher={Cambridge University Press, Cambridge},
  date={1989},
  pages={xiv+320},
}

\bib{Sal11}{article}{
  author={Salom\~ao, Rodrigo},
  title={Fibrations by nonsmooth genus three curves in characteristic three},
  journal={J. Pure Appl. Algebra},
  volume={215},
  date={2011},
  number={8},
  pages={1967--1979},
}

\bib{Sc09}{article}{
  author={Schr\"oer, Stefan},
  title={On genus change in algebraic curves over imperfect fields},
  journal={Proc. Amer. Math. Soc.},
  volume={137},
  date={2009},
  number={4},
  pages={1239-1243},
}

\bib{SchSh19}{book}{
  author={Sch\"utt, Matthias},
  author={Shioda, Tetsuji},
  title={Mordell-Weil lattices},
  series={Ergebnisse der Mathematik und ihrer Grenzgebiete. 3. Folge. A Series of Modern Surveys in Mathematics [Results in Mathematics and Related Areas. 3rd Series. A Series of Modern Surveys in Mathematics]},
  volume={70},
  publisher={Springer, Singapore},
  date={2019},
  pages={xvi+431},
}

\bib{Sha66}{book}{
  author={Shafarevich, Igor R.},
  title={Lectures on minimal models and birational transformations of two dimensional schemes},
  series={Tata Institute of Fundamental Research Lectures on Mathematics and Physics},
  volume={37},
  note={Notes by C. P. Ramanujam},
  publisher={Tata Institute of Fundamental Research, Bombay},
  date={1966},
  pages={iv+175 pp.},
}

\bib{St04}{article}{
  author={St\"ohr, Karl-Otto},
  title={On Bertini's theorem in characteristic $p$ for families of canonical curves in $\PP ^{(p-3)/2}$},
  journal={Proc. London Math. Soc. (3)},
  volume={89},
  date={2004},
  number={2},
  pages={291--316},
}

\bib{St07}{article}{
  author={St\"ohr, Karl-Otto},
  title={On Bertini's theorem for fibrations by plane projective quartic curves in characteristic five},
  journal={J. Algebra},
  volume={315},
  date={2007},
  number={2},
  pages={502--526},
}

\bib{Tate52}{article}{
  author={Tate, John},
  title={Genus change in inseparable extensions of function fields},
  journal={Proc. Amer. Math. Soc.},
  volume={3},
  date={1952},
  pages={400--406},
}

\bib{Tate74}{article}{
  author={Tate, John},
  title={The arithmetic of elliptic curves},
  journal={Invent. Math.},
  volume={23},
  date={1974},
  pages={179--206},
}
\end{biblist}
\end{bibdiv}

\end{document}